\numberwithin{equation}{section}
\definecolor{darkgreen}{rgb}{0.09, 0.45, 0.27}
\definecolor{debianred}{rgb}{0.84, 0.04, 0.33}
\definecolor{orange}{rgb}{1.0, 0.5, 0.0}
\newcommand{\R}{\mathbb R}
\newcommand{\CC}{\mathbb C}
\newcommand{\be}{\begin{equation}}
\newcommand{\ee}{\end{equation}}
\newcommand{\ben}{\begin{eqnarray*}}
\newcommand{\een}{\end{eqnarray*}}
\newtheorem{theorem}{Theorem}
\newtheorem{lemma}{Lemma}
\newtheorem{remark}{Remark}
\newtheorem{proposition}{Proposition}
\newtheorem{definition}{Definition}
\definecolor{DarkBlue}{rgb}{0,0.1,0.7}  
\definecolor{DarkGreen}{rgb}{0,0.5,0.1}
\newcommand\soutD{\bgroup\markoverwith
	{\textcolor{DarkGreen}{\rule[.5ex]{2pt}{1pt}}}\ULon}
\newcommand{\Hm}[1]{\leavevmode{\marginpar{\tiny%
			$\hbox to 0mm{\hspace*{-0.5mm}$\leftarrow$\hss}%
			\vcenter{\vrule depth 0.1mm height 0.1mm 
		    width \the\marginparwidth}%
			\hbox to
			-0.5mm{\hss$\rightarrow$\hspace*{-0.5mm}}$\\
			\relax\raggedright #1}}}
\begin{document}

%\begin{frontmatter}
%\hfill{\today}\bigskip 

\title[Quasilinear electromagnetic Schr\"odinger equations]
{Critical quasilinear Schr\"odinger equations with electromagnetic fields}

\author[L. Baldelli]{Laura Baldelli} \email{labaldelli@ugr.es}

\author[R. Filippucci]{Roberta Filippucci} \email{roberta.filippucci@unipg.it}

\author[D.  Krej\v ci\v r\'ik]{David  Krej\v ci\v r\'ik}
\email{david.krejcirik@fjfi.cvut.cz}

\address[Filippucci]{Department of Mathematics -- University of Perugia --
Via Vanvitelli 1 -- 06123 Perugia,  Italy}
\address[Baldelli]{IMAG, Departamento de An\'alisis Matem\'atico -- Universidad de Granada -- Campus Fuentenueva -- 18071 Granada, Spain}
\address[ Krej\v ci\v r\'ik]{Department of Mathematics, Faculty of Nuclear Sciences and Physical Engineering -- Czech
Technical University in Prague -- Trojanova 13 -- 120 00 Prague, Czech Republic}

%\date{17 January 2025}

\begin{abstract}
The $p$-Laplace operator in the entire $N$-dimensional Euclidean space, subject to external electromagnetic potentials, is investigated. In the general case $1<p<N$, the existence of at least one solution of mountain pass type to a weighted critical equation is proved. Our technique relies on variational methods and faces a twofold difficulty: double lack of compactness, which requires concentration compactness arguments; and a complex quasilinear framework, which entails appropriate inequalities.
\end{abstract}

\keywords
{Schr\"odinger equations, electromagnetic fields, critical problems, existence result, concentration compactness \\
\phantom{aa} 2020 AMS Subject Classification: Primary: 35J62
Secondary: 83C50, 35J20, 35Q40, 35J60.}

\maketitle

\section{Introduction}
 
Schr\"odinger operators with electric and magnetic fields
are linear differential operators based on the Laplacian, 
subject to zero- and first-order perturbations, respectively. 
They have been intensively analysed during the last century 
in the context of quantum mechanics, where they model the total
energy of a nonrelativistic particle interacting with external
electromagnetic fields.

The study of magnetic effects is more subtle.
This is primarily because perturbations by first-order differential operators
are more complicated than multiplications by scalar functions.
What is more, the magnetic field enters 
the quantum system indirectly through its vector potential.  
Finally, the magnetic field represents an imaginary perturbation
to the Laplacian, which leads to the necessity of working 
with complex-valued functional spaces and to the lack of 
traditional tools like the maximum principle.

These magnetic intricacies are probably behind the fact
that the nonlinear generalisations
based on the $p$-Laplacian were not considered in the literature 
until very recently~\cite{ckl}.   
In that paper, the framework of the magnetic $p$-Laplacian
is introduced and Hardy-type inequalities are derived.
See also the follow-up~\cite{Chen-Tang} for some improvements 
 as well as the survey \cite{npsv} where the authors extended to the magnetic setting several characterisations of Sobolev and BV functions.

The objective of the present paper is to tackle 
the unprecedented betrothal of the nonlinear $p$-Laplacian
with the magnetic field in yet another unexplored context:
we are concerned with the existence of solutions
to the semilinear stationary electromagnetic Schr\"odinger equation 
with critical growth in~$\R^N$ with $N > 1$. 
To be more specific, given an electric and magnetic potentials 
$V:\R^N \to \R$ and $A:\R^N \to \R^N$, respectively, we consider the equation
  
\begin{equation}\label{prob}
-\Delta_{A,p} u+V(x)|u|^{p-2}u=f(x,|u|^p)|u|^{p-2}u+K(x)|u|^{p^*-2}u, \qquad x\in\mathbb R^N.
\end{equation}
 
Here $\Delta_{A,p} u =  \nabla_A \cdot (|\nabla_A u|^{p-2}\nabla_Au)$ 
is the magnetic $p$-Laplacian with $1<p<N$ 
and $\nabla_A = \nabla+iA$ is the magnetic gradient, 
$p^*=Np/(N-p)$ is the Sobolev's critical exponent
and the functions $f:\R^N\times\R\to\R$ and $K:\R^N\to\R$
model subcritical and critical nonlinearities, respectively.
Our goal is to identify suitable assumptions on the functions 
$V,f,K$ and notably~$A$ which guarantee the existence of solutions~$u$ to \eqref{prob}.

Despite the vector potential~$A$ appears explicitly in~\eqref{prob}, 
the quantity with more physical relevance is the magnetic field $B = dA$
satisfying the Maxwell equation $dB = 0$,
where $d$ is the exterior derivative.
In particular, given~$B$, the potential~$A$ is not uniquely determined.
This is called gauge invariance in physics.
In our context, choosing $\tilde A = A + d\phi$ with $\phi:\mathbb R^N \to \mathbb R$
(so that $d\tilde A = dA$), the respective solutions $\tilde u,u$ of~\eqref{prob} 
merely differ by a phase factor, namely $\tilde u = e^{-i\phi} u$.

Note that equation~\eqref{prob} involves three types of nonlinearities given by the quasilinear nature of the operator ($p \not= 2$), the critical ($K \not= 0$) and the subcritical ($f\not=0$) terms in the reaction.
The traditional nonlinear problem is the semilinear electromagnetic 
Schr\"odinger equation with $p=2$ and $K = 0$ .
This subcritical case was investigated in the pioneering papers~\cite{el} 
by minimisation arguments for $N=2,3$, in \cite{kur} via variational methods, 
see \cite{st} for $N=3$. 
In the critical case (still $p=2$ but $K$~nonzero),  
the magnetic Laplacian was studied by min-max arguments by~\cite{AS3} and~\cite{cs5}, 
where the existence of at least a nontrivial solution is established. 

There are also many works which consider particular realisations of~\eqref{prob}  
when a small semiclassical parameter $\hbar \to 0^+$ is added to the gradient term, 
namely the replacement $\nabla \mapsto \hbar\nabla$ is considered.
We refer to the pioneering works \cite{cs,p} 
taking into account different type of nonlinearities. 
More recently, Ji and R\u{a}dulescu \cite{JR21, JR21ad} 
and Ambrosio \cite{aAS, ajmaa} consider the magnetic situation (still with $p=2$) 
and prove multiplicity  and
concentration of nontrivial solutions for $\varepsilon>0$ small by Lusternik--Schni\-rel\-man theory 
and the penalisation technique. 
We also mention \cite{zj13, fzs} where there is an attempt to deal 
with the semiclassical setting, by using a different definition of the magnetic $p$-Laplacian.

The paper of Brezis and Nirenberg~\cite{BN} paved the way in the study of critical nonlinear Schr\"odinger equation \eqref{prob} without the magnetic field (i.e.\ $A=0$).
Later, existence and multiplicity results for critical problems driven by general quasilinear operators were gained, see \cite{GPeral,  SY, AP, BF, BFdeg, fpr} and the references therein. 

The given bibliography is certainly far to be complete because of the huge number of papers devoted to quasilinear critical problems.
As far as we know, however, equation~\eqref{prob} with simultaneously 
$p\not=2$ and $A \not= 0$ has not been studied so far.  The objective of this paper is to fill in this gap. Specifically, 
the novelty of this paper consists in undertaking the study of the magnetic nonlinear Laplacian from a variational perspective, thereby establishing an existence result, as far as we are aware, for the first time in the existent literature. Despite the fact that the method's foundation is rooted in the Mountain Pass Theorem, the nonlinear nature of the magnetic context renders the procedure considerably more intricate and delicate in comparison to the linear case, which has been extensively explored in the current literature.
Motivated by these papers, we focus our attention 
on the existence of nontrivial solutions.
To the best of our knowledge, this is the first time that non-concentrating solutions to the magnetic $p$-Laplacian are taken into account.

To state our main result, let us collect characteristic hypotheses
about $V,f,K$ and~$A$.

\begin{itemize} 
 \item[$(f_0)$] $f\in C(\mathbb R^N, \mathbb R_0^+)$ 
 and $\lim_{t\to0}f(x,t)=0$ uniformly in $x\in\mathbb R^N$; 
\item[$(f_1)$]  there exist two nonnegative weights $h_1\in C(\mathbb R^N)\cap L^{N/p}(\mathbb R^N)\cap L^{p^*/(p^*-k)}(\mathbb R^N)$, where $N/p=p^*/(p^*-p)$,  $ 0<h_2\in C(\mathbb R^N)\cap L^{p^*/(p^*-k)}(\mathbb R^N)$ with $p<k<p^*$ such that
$$f(x, t)\le h_1(x)+h_2(x)t^{(k-p)/p}$$
for all $(x, t)\in \mathbb R^N\times \mathbb R_0^+$;
\item[$(f_2)$]  there exists $p<\theta<p^*$ such that for all $(x, t)\in \mathbb R^N\times \mathbb R^+$ it holds
$$0<\frac{\theta}{p}F(x, t) \le f (x, t)t,\qquad F(x,t)=\int_0^{t}f(x,s)ds \,;$$
\item[$(f_3)$] there exist $\lambda\in\R$  and $q\in(p, k)$ such that $F(x, t)\ge \lambda t^{q/p}$ for all $t\ge0$, where 
\begin{itemize}
\item[$\circ$] $\lambda>0$ if $N\ge p^2$ and $1<p\le 2$,
\item[$\circ$]  $\lambda$ sufficiently large if $N<p^2$ and $1<p\le 2$,
\item[$\circ$] $\lambda$ sufficiently large if $p>2$;
\end{itemize}
\item[$(A)$] $A\in C(\mathbb R^N, \mathbb R^N)$;
\item[$(K)$] $0\le K\in C(\mathbb R^N)\cap L^\infty(\mathbb R^N)$ with $\|K\|_\infty=K(0)>0$ and
$$K(x)=K(0)+O(|x|^\tau)$$
in $B(0,\delta_K)$, for some $\delta_K>0$, where
$$\frac{N}{p-1}>\tau>
\begin{cases}
p &\quad N> p^2,\\
\frac{N-p}{p-1} &\quad N\le p^2;
\end{cases}
$$
\item[$(V)$]
$V\in C(\mathbb R^N)$ with ${\displaystyle \inf_{x\in\mathbb R^N} V(x)}=V_0>0$.
\end{itemize}

Actually, condition $(f_2)$ forces that $s \mapsto s^{\theta/p}/F(x,s)$ is nonincreasing in the entire $\mathbb R^+$, 
from which $F(x,s)\ge C s^{\theta/p}$,  $s\ge s_0>0$
with $C= F(x,s_0)s_0^{-\theta/p}$. Thus, $(f_2)$ and $(f_3)$ are comparable in some cases.
Furthermore, condition $(f_1)$ for $t=0$ is trivially satisfied as a consequence of $(f_0)$. 
Moreover, as a consequence of $(f_0)$ and $(f_1)$, for all $\xi>0$
there exists $C_\xi>0$ such that
\begin{equation}\label{f0f1}
f(x,t)\le \xi + C_\xi h_3(x)t^{(k-p)/p}, \qquad (x,t)\in\mathbb R^N\times\mathbb R_0^+,
\end{equation}
where $h_3(x)=h_1(x)+h_2(x)\in L^{p^*/(p^*-k)}(\mathbb R^N)$ 
by summability of $h_1, h_2$ in $(f_1)$. Note that a similar property holds also for $F$, see the first lines of the proof of Lemma \ref{mpt_geometry}.
In order to define the operator associated with
the magnetic $p$-Laplacian $-\Delta_{A,p}$ variationally,
condition $(A)$ can be relaxed to $A \in L_\mathrm{loc}^{p}(\R^N,\R^N)$. 
However, for technical reasons (see the proof of Lemma~\ref{c<csegnato}),
we have decided to assume its continuity.
Finally, note that the flatness condition $(K)$ has been imposed by several authors as \cite{SY, dh}.

Our main result is the following.

\begin{theorem}\label{main}
Let $(f_0), (f_1), (f_2), (f_3), (A), (K), (V)$ hold. 
Then there exists at least one nontrivial weak solution of problem \eqref{prob}.
\end{theorem}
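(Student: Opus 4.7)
The plan is to apply the Mountain Pass Theorem of Ambrosetti--Rabinowitz to the $C^1$ functional
\begin{equation*}
I(u) = \frac{1}{p}\int_{\mathbb{R}^N}\bigl(|\nabla_A u|^p + V(x)|u|^p\bigr)\,dx - \frac{1}{p}\int_{\mathbb{R}^N}F(x,|u|^p)\,dx - \frac{1}{p^*}\int_{\mathbb{R}^N}K(x)|u|^{p^*}\,dx
\end{equation*}
defined on the complex-valued magnetic Sobolev space $H := W^{1,p}_{A,V}(\mathbb{R}^N,\mathbb{C})$ equipped with the norm coming from the first integral; under $(A)$ and $(V)$ this is a reflexive Banach space whose critical points coincide with weak solutions of \eqref{prob}. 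The diamagnetic pointwise inequality $|\nabla|u|| \le |\nabla_A u|$ yields a continuous embedding $H \hookrightarrow L^r(\mathbb{R}^N,\mathbb{C})$ for every $p\le r\le p^*$. Mountain pass geometry is then direct: from $(f_0)$--$(f_1)$ and the summability of $h_1,h_2$, the function $F$ is controlled by a small $L^p$ piece plus an $L^k$ piece weighted by $h_3$, so $I(u)\ge \alpha>0$ on a small sphere $\|u\|=\rho$; the Ambrosetti--Rabinowitz condition $(f_2)$ gives $F(x,t)\ge C t^{\theta/p}$ for large $t$ and hence $I(tv)\to -\infty$ for any fixed $v\neq 0$, providing a point $e\in H$ with $I(e)<0$. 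The mountain pass level $c>0$ is therefore well defined.

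The main difficulty is recovering compactness at the level $c$. A Palais--Smale sequence at level $c$ is bounded by $(f_2)$, but both the critical exponent and the non-compactness of $\mathbb{R}^N$ obstruct strong convergence. The remedy is a concentration--compactness analysis \`a la Lions, transferred to the magnetic quasilinear setting: by the diamagnetic inequality the best magnetic $p$-Sobolev constant coincides with the classical one $S$, and therefore any nontrivial bubble at a concentration point contributes at least the quantum $\tfrac{1}{N}K(0)^{-(N-p)/p}S^{N/p}$ to the limiting energy. Setting $c^*$ equal to this quantum, one shows that whenever $c<c^*$ no bubble can form at any finite point, while vanishing together with escape to infinity are ruled out by combining Lions' lemma applied to $|u_n|$ with the lower bound $V\ge V_0>0$ from $(V)$. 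Consequently the $(PS)_c$ condition holds for all $c<c^*$.

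It remains to check $c<c^*$. The natural test path is $t\mapsto tu_\varepsilon$, where $u_\varepsilon(x) = e^{-iA(0)\cdot x}\eta(x)U_\varepsilon(x)$ with $U_\varepsilon$ the Aubin--Talenti radial instanton centered at the maximum of $K$ (which by $(K)$ may be placed at the origin), $\eta$ a smooth cutoff, and the phase $e^{-iA(0)\cdot x}$ gauging away $A$ to first order at the origin so that $\nabla_A u_\varepsilon$ reduces, up to lower order, to the real gradient of $\eta U_\varepsilon$. Asymptotic expansions as $\varepsilon\to 0^+$ then reproduce the classical Brezis--Nirenberg balance: the magnetic correction coming from $A(x)-A(0)$ is absorbed into lower order terms thanks to the continuity $(A)$; the flatness $(K)$ makes $\int (K(0)-K(x))|u_\varepsilon|^{p^*}$ of order $\varepsilon^\tau$; and the subcritical gain $\int F(x,|u_\varepsilon|^p)\ge \lambda\int |u_\varepsilon|^q$ from $(f_3)$ is of explicit order in $\varepsilon$ determined by $p,N,q$. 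The thresholds on $\tau$ in $(K)$ and the regime dichotomy on $\lambda$ in $(f_3)$ are calibrated precisely so that in each case of $p$ and $N$ the subcritical gain strictly dominates both the flatness error and the quasilinear lower order remainders. This forces $\max_{t\ge 0}I(tu_\varepsilon)<c^*$, and the Mountain Pass Theorem produces a critical point which is nontrivial because $c>0$. The most delicate step is exactly this level estimate: in contrast with the classical case $p=2$, $A=0$, the quasilinear operator prevents explicit orthogonal decompositions, and the magnetic phase forces complex-valued manipulations, so the fine inequalities for the magnetic $p$-Laplacian developed earlier in the paper become essential for controlling the expansions of every term of $I(tu_\varepsilon)$ simultaneously.
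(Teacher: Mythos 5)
Your proposal is correct and follows essentially the same approach as the paper: mountain pass geometry for $J_A$ on $W^{1,p}_{A,V}$, concentration--compactness (including the behaviour at infinity) to establish the $(PS)_c$ condition below the threshold $c_P=S^{N/p}/(N\|K\|_\infty^{N/p^*})$, and the level estimate via the gauged cutoff instanton $e^{-iA(0)\cdot x}\psi U_\varepsilon$ using $(K)$, $(f_3)$, and the continuity in $(A)$ to control all remainders. One small misattribution: the complex version of Simon's inequality proved in the appendix is needed to conclude strong convergence of the Palais--Smale sequence in the compactness lemma, not in the level estimate as you suggest, though this does not affect the structure of the argument.
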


The proof of Theorem \ref{main} is obtained using suitable variational mountain pass arguments. 
We emphasise that when dealing with the magnetic case, 
it is rather difficult to obtain compactness by overcoming the double lack of compactness,
due to the critical growth of the nonlinearity and the presence of the entire $\mathbb R^N$, because of the necessary complex nature of the functional spaces. Indeed, compared with the subcritical case, a more careful analysis should be performed
and many classical tools must be reformulated spending much more attention. 
More specifically, after proving the mountain pass geometry, we use the concentration-compactness principle of Lions, to verify that the Palais--Smale condition is regained below a suitable level related to the best constant of the Sobolev embedding and the weight associated with the critical term in the nonlinearity. We also underline that in the final part of the proof we use a special inequality which is known, as far as we see, only in the real case. Moreover, in proving that the mountain pass level is under the obtained threshold, we perform a very delicate analysis of the functional behaviour that also includes the application of the Talenti function.
 
The paper is structured as follows. 
In Section~\ref{prel}, we collect some classical definitions 
as well as
some useful results, including the Mountain Pass Lemma.
In Section~\ref{pssec} we prove first some properties of Palais--Smale sequences and then,  we check the validity for the functional of the Mountain pass geometry. 
As a consequence, Theorem~\ref{main} is established at the end of this section. 
Finally, Appendix~\ref{ineq} contains the proof of a useful complex inequality.

\section{Preliminaries}\label{prel}

We start with setting some basic notations. We indicate with $B_r(x)$ the open $\R^N$-ball of centre $x\in\R^N$ and radius $r>0$, omitting $x$ when it is the origin.
Let $M(\R^N,\R)$ be the space of all finite signed Radon measures. Concerning convergence of sequence of measures $(\mu_n)_n\subseteq M(\R^N,\R)$, we write $\mu_n\stackrel{*}{\rightharpoonup}\mu$ and $\mu_n\rightharpoonup\mu$ to signify tight and weak convergence, respectively (see \cite{BF} for more details).

For any complex number $a\in\CC^N$  we use the notation $a=a_R+ia_I$ with $a_R,a_I\in \R^N$, or equivalently $a=\Re(a)+i\Im(a)$, according to the more suitable. 
Letting $N\ge 1$, the scalar product in $\R^N$ between $u,v\in\R^N$ is denoted by $u\cdot v\in\R$, while the complex product in $\CC^N$ between $u,v\in\CC^N$ is denoted by $\langle u,v\rangle_{\CC^N}=u \cdot \overline{v}$, see Appendix \ref{ineq}. For the case $N=1$ we have $\langle u,v\rangle_{\CC}=u \cdot \overline{v}=u_Rv_R+u_Iv_I+i(u_Iv_R-u_Rv_I)\in\mathbb C$ for any $u,v\in\CC$.

Given any measurable set $\Omega\subseteq \R^N$ and $q\in[1,+\infty]$, $L^q(\Omega)$ stands for the standard Lebesgue space, whose norm will be indicated with $\|\cdot\|_{L^q(\Omega)}$, or simply $\|\cdot\|_q$ when $\Omega=\R^N$. 

For $p\in(1,N)$, we will also make use of the Beppo Levi space $D^{1,p}(\R^N)$, which is the closure of $C^\infty_c(\R^N)$ with respect to the norm
$\|u\|_{D^{1,p}(\R^N)}=\|\nabla u\|_p$.
Sobolev's theo\-rem ensures that $D^{1,p}(\R^N) \hookrightarrow L^{p^*}(\R^N)$ continuously; the best (embedding) constant $c$ in the Sobolev inequality $\|u\|_{L^{p^*}(\R^N)}\leq c \|u\|_{D^{1,p}(\R^N)}$ is $S^{-1/p}$, being
\begin{equation}\label{S}
S= \inf_{u\in D^{1,p}(\R^N)\setminus\{0\}} \frac{\|\nabla u\|_p^p}{\|u\|_{p^*}^p}. 
\end{equation}
It is known explicitly (see, e.g., \cite[Thm.~8.3]{LL}).
According to Sobolev's theorem, one has $D^{1,p}(\R^N) = \left\{u\in L^{p^*}(\R^N): \, |\nabla u|\in L^p(\R^N)\right\}.$

In what follows,  $C$ will denote a positive constant which may change its value at each passage.

Following \cite{ck,ckl,npsv}, the magnetic Sobolev space $W^{1,p}_A(\R^N, \mathbb C)$ is given by
$$W^{1,p}_A(\R^N, \mathbb C)=\left\{u\in L^p(\R^N,\CC) : h_{A,p}[u]<\infty\right\},$$
where
$$h_{A,p}[u] =\left(\int_{\mathbb R^N} |\nabla_A u|^p dx\right)^{1/p} =\left( \int_{\mathbb R^N} |\nabla u+iA(x)u|^p dx\right)^{1/p}.$$
The space $W^{1,p}_A(\R^N, \mathbb C)$ is equipped with the norm
$$\|u \|_{A,p}=\biggl((h_{A,p}[u])^p + \int_{\mathbb R^N}|u|^p dx\biggr)^{1/p}.$$

In order to deal with problem \eqref{prob} where a scalar potential term~$V$ appears, 
we introduce the electromagnetic Sobolev space $W^{1,p}_{A, V}(\mathbb R^N, \mathbb C)$ given by
$$W^{1,p}_{A, V}(\mathbb R^N, \mathbb C)=\overline{C_0^\infty(\mathbb R^N, \mathbb C)}^{\|\cdot \|_{A,p,V}}$$
where
$$\|u\|_{A,p,V}=\biggl(\int_{\mathbb R^N} |\nabla_A u|^p+V(x)|u|^p dx\biggr)^{1/p}.$$
Note that the closure is well defined because~$V$ is positive due to hypothesis~$(V)$.  
For simplicity we call $X=W^{1,p}_{A, V}(\mathbb R^N, \mathbb C)$ 
and $\|\cdot\|=\|\cdot\|_{A,p,V}$.

Given any $u\in X$, the functional associated to \eqref{prob} reads
 
\begin{equation}\label{funct}
J_A(u)=\frac1p\|u\|^p-\frac1{p^*}\int_{\mathbb R^N}K(x)|u|^{p^*}dx-\frac{1}{p}\int_{\mathbb R^N}F(x,|u|^p)dx,
\end{equation}
 
where
$$F(x,t^p)=\int_0^{t^p}f(x,s)ds=p\int_0^{t} f(x,s^p)s^{p-1}\, ds
, \qquad t>0.$$

An important tool in the study of magnetic fields is the diamagnetic inequality 
also called Kato's inequality (see, e.g., \cite[Thm.~5.3.1]{bel}) 
 
\begin{equation}\label{kato}
|\nabla_A u(x)| \ge  |\nabla|u(x)||
\quad \text{ for a.e.}\quad  x \in \mathbb R^N,
\end{equation}

valid for every $u \in X$.
 In particular, denoting $
  L^p_V(\mathbb R^N, \mathbb C)
  = \{u \in L^p(\R^N,\mathbb{C}) :  \int_{\mathbb R^N} V(x)|u|^p dx<\infty \}
$ the Lebesgue weighted space, by \eqref{kato}, it is clear that $X\subset W^{1,p}_{V}(\mathbb R^N, \mathbb C)$, where
$W^{1,p}_{V}(\mathbb R^N, \mathbb C)=W^{1,p}(\mathbb R^N, \mathbb C)\cap L^p_V(\mathbb R^N, \mathbb C)$ endowed with the norm 
$$\|u\|_{p,V}=\biggl(\int_{\mathbb R^N} |\nabla  u|^p+V(x)|u|^p dx\biggr)^{1/p}.$$
Actually, it holds $X=W^{1,p}_{V}(\mathbb R^N, \mathbb C)$ if $A$ is bounded, as observed in \cite{ckl}.

Furthermore, thanks to $(V)$, if $u\in X$ then  $u\in L^s(\mathbb R^N, \mathbb C)$  
for any $s\in[p,p^*]$, namely the embedding
$$X\hookrightarrow L^s(\R^N, \CC)$$
is continuous for any $s\in [p,p^*]$. Thus, for each $s\in[p,p^*]$ there exists $C_s>0$ such that 
\begin{equation}\label{green}
\|u\|_s\le C_s\biggl(\int_{\mathbb R^N} |\nabla|u||^p\biggr)^{1/p}\le C_s \|u\|.
\end{equation}
Moreover the embedding
\begin{equation}\label{compem}
X\hookrightarrow \hookrightarrow L^s_\mathrm{loc}(\R^N, \CC)
\end{equation}
is compact for any $s\in [1,p^*)$, see \cite{AP}.

In order to study \eqref{prob}, we observe that
$$\nabla_A u=(D^1_Au, \dots, D^N_Au), \qquad 
D^j_Au=\partial_ju+iA(x)u,\, \,\, j=1,\dots,N $$
and, for any $\eta\in C^1(\mathbb R^N, \mathbb R)$ it holds the formula
\begin{equation}\label{for1}
\overline{\nabla_A(u\eta)}=\,\overline{u} \,\nabla\eta +\eta \overline{\nabla_Au},\end{equation}
which is, since $\overline{\nabla_A u}=\bigl(\overline{D^1_Au}, \dots, \overline{D^N_Au}\bigr)$, equivalent to
$\overline{D_A^j(u\eta)}=\,\overline{u}\,\partial_j \eta+\eta\overline{D_A^j u}.$

A classical strategy to prove existence results for problem \eqref{prob} consists in looking for critical points of the $C^1$ functional defined in \eqref{funct}.
The differential of $J_A$ can be written as
$$\langle J'_A(u),v\rangle=r_{A,p}(u, v)
-\Re \biggl(\int_{\mathbb R^N}K(x)|u|^{p^*-2}\langle u, v \rangle_{\CC^N}dx+\int_{\mathbb R^N} f(x,|u|^p)|u|^{p-2}\langle u, v \rangle_{\CC^N}  dx\biggr),$$
where  $u,v\in X$ and 
$$r_{A,p}(u, v)=\Re \biggl(\int_{\mathbb R^N} |\nabla_A u|^{p-2}\langle \nabla_A u, \nabla_A v \rangle_{\CC^N}  dx+\int_{\mathbb R^N} V(x)|u|^{p-2}\langle u, v \rangle_{\CC^N}  dx\biggr).
$$

Since we deal with critical problems in the entire Euclidean space, 
we necessarily face the so-called double lack of compactness. 
It is customary to recover compactness by using 
the concentration compactness principle around finite points 
by Lions~\cite{LionsRev1}, but under the requirement of tight convergence when $\Omega$ is unbounded,
 or equivalently the concentration compactness at infinity by Ben-Naoum et al.~\cite{BTW}, aimed
to avoid also concentration of mass at infinity, which roughly means tightness. 
For completeness, we recall these two methods here.

\begin{proposition}[{\cite[Lem.~I.1]{LionsRev1}}]\label{lem4.1} 
Let $\Omega\subset \mathbb R^N$ be a domain and $1\le p< N$.
Let $(v_{n})_{n}$ be a bounded sequence in $D^{1,p}(\Omega,\mathbb R)$ converging weakly to some $v$ and such that
$|\nabla v_{n}|^{p}\rightharpoonup\mu$ and either $|v_{n}|^{p^*}\rightharpoonup\nu$ if $\Omega$ is bounded
or $|v_{n}|^{p^*}\overset{\ast}{\rightharpoonup}\nu$ if $\Omega$ is unbounded, where
$\mu$, $\nu$ are bounded nonnegative measures on $\Omega$. 
Then there exists some at most countable set
$J$ such that
\begin{enumerate}
\item[(i)] $\nu=|v|^{p^*}+ \sum_{j\in J}\nu_{j}\delta_{x_{j}}, \quad \nu_j\ge 0,$
\item[(ii)] $\mu\ge|\nabla v|^{p}+\sum_{j\in J}\mu_{j}\delta_{x_{j}}, \quad \mu_j\ge 0,$
\item[(iii)] $S\nu_{j}^{p/p^*}\le \mu_{j},\qquad \displaystyle{\sum_{j\in J}\nu_{j}^{p/p^*}<\infty},$
\end{enumerate}
where $(x_{j})_{j\in J}$ are distinct points in $\Omega$, $\delta_{x}$ is the Dirac-mass of mass 1 concentrated at
$x\in\Omega$ and $S$ is the Sobolev's constant defined in \eqref{S}.
\end{proposition}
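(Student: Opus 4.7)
My plan is to reduce to the case $v \equiv 0$ using a Brezis--Lieb splitting and then extract a reverse H\"older--type inequality between $\nu$ and $\mu$ by testing the Sobolev inequality against localized cut-offs; atomicity of $\nu$ then follows. First I would set $w_n = v_n - v$, so $w_n \rightharpoonup 0$ in $D^{1,p}(\Omega,\R)$. The Brezis--Lieb lemma gives $\||v_n|^{p^*} - |w_n|^{p^*} - |v|^{p^*}\|_{L^1} \to 0$, so after passing to a subsequence $|w_n|^{p^*} \rightharpoonup \nu - |v|^{p^*} =: \tilde\nu \geq 0$ in the appropriate sense. An analogous but weaker statement for the gradient term yields $\mu \geq |\nabla v|^p + \tilde\mu$, where $\tilde\mu$ is any weak-$*$ cluster measure of $|\nabla w_n|^p$. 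This is why (ii) is only an inequality while (i) is an equality. It therefore suffices to prove the three conclusions when $v_n \rightharpoonup 0$.

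Assuming $v_n \rightharpoonup 0$, for every $\phi \in C_c^\infty(\Omega)$ I would apply Sobolev's inequality to $\phi v_n$:
\begin{equation*}
S^{1/p}\left(\int_\Omega |\phi v_n|^{p^*}\,dx\right)^{1/p^*}
\leq \|\nabla(\phi v_n)\|_{L^p}
\leq \|\phi \nabla v_n\|_{L^p} + \|v_n \nabla\phi\|_{L^p}.
\end{equation*}
The compact local embedding $D^{1,p}\hookrightarrow L^p_{\mathrm{loc}}$ kills the last term as $n\to\infty$, and weak-$*$ convergence of $|\nabla v_n|^p$ and $|v_n|^{p^*}$ lets me pass to the limit in the remaining ones. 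The outcome is the reverse H\"older inequality
\begin{equation*}
S \left(\int_\Omega |\phi|^{p^*}\,d\nu\right)^{p/p^*}
\leq \int_\Omega |\phi|^p\,d\mu
\qquad\text{for all }\phi \in C_c^\infty(\Omega).
\end{equation*}

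The final step is to deduce from this inequality that $\nu$ must be purely atomic with the claimed structure. A standard density argument extends the inequality to all bounded Borel $\phi$, and then I would test with approximations of indicators of small balls around an arbitrary point $x_0 \in \supp\nu$: shrinking the ball and using Lebesgue differentiation forces $\mu(\{x_0\}) \geq S\,\nu(\{x_0\})^{p/p^*}$, and any absolutely continuous part of $\nu$ with respect to $\mu$ would be ruled out by rescaling the cut-off (it cannot satisfy the inequality with exponent $p/p^* < 1$). This yields $\nu = \sum_{j\in J}\nu_j\delta_{x_j}$ with $S\nu_j^{p/p^*} \leq \mu_j$; the summability $\sum \nu_j^{p/p^*} < \infty$ follows because $\mu$ is a finite measure. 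The step I expect to be most delicate is the gradient reduction to $v=0$ (justifying $\mu \geq |\nabla v|^p + \tilde\mu$, rather than equality, without losing measurability), and the atomicity argument itself, where one must carefully rule out any nonatomic singular part of $\nu$ relative to $\mu$.
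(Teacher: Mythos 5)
The paper does not actually prove this proposition; it is cited verbatim from Lions \cite{LionsRev1}, so there is no ``paper's own proof'' to compare against. Judged on its own merits, your outline follows the standard modern route (Brezis--Lieb splitting to reduce to $v\equiv 0$, a localized Sobolev/reverse H\"older inequality for the pair $(\nu,\mu)$, then atomicity of $\nu$ from the sublinear exponent $p/p^*$), and the parts concerning $\nu$, the countability of the atoms, and the summability of $\sum_j\nu_j^{p/p^*}$ are essentially right. One small correction in the atomicity step: the reverse H\"older inequality $S\nu(E)^{p/p^*}\le\mu(E)$ shows that $\nu\ll\mu$, not that ``any absolutely continuous part of $\nu$ with respect to $\mu$ is ruled out''; what it rules out is any contribution of the Radon--Nikodym density $D_\mu\nu$ at points where $\mu$ has no atom, because $\nu(B_r(x))/\mu(B_r(x))\le S^{-p^*/p}\mu(B_r(x))^{p^*/p-1}\to 0$ whenever $\mu(\{x\})=0$.

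The genuine gap is the gradient reduction, which you correctly flag as delicate but do not actually justify. The inequality $\mu\ge|\nabla v|^p\,dx+\tilde\mu$ (with $\tilde\mu$ a weak-$*$ limit of $|\nabla w_n|^p$) is not a consequence of Brezis--Lieb, since $\nabla v_n\rightharpoonup\nabla v$ gives no pointwise a.e.\ convergence, and the full measure inequality is not obviously true. What is both provable and sufficient is its \emph{atomic} part: for every $\varepsilon>0$ there is $C_\varepsilon$ with $|\nabla v_n|^p=|\nabla w_n+\nabla v|^p\ge(1-\varepsilon)|\nabla w_n|^p-C_\varepsilon|\nabla v|^p$, so $\mu\ge(1-\varepsilon)\tilde\mu-C_\varepsilon|\nabla v|^p\,dx$ as measures; evaluating at a singleton $\{x_j\}$ (where the absolutely continuous term $C_\varepsilon|\nabla v|^p\,dx$ contributes zero) and letting $\varepsilon\to 0$ yields $\mu(\{x_j\})\ge\tilde\mu(\{x_j\})\ge S\nu_j^{p/p^*}$. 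Combining this with $\mu\ge|\nabla v|^p\,dx$ (weak lower semicontinuity) and the mutual singularity of $|\nabla v|^p\,dx$ and $\sum_j\mu_j\delta_{x_j}$ gives (ii). Alternatively, you can avoid the reduction for $\mu$ altogether: test the Sobolev inequality directly with $\phi v_n$ (not $\phi w_n$), pass to the limit to get
\begin{equation*}
S^{1/p}\Bigl(\int_\Omega|\phi|^{p^*}\,d\nu\Bigr)^{1/p^*}\le\Bigl(\int_\Omega|\phi|^p\,d\mu\Bigr)^{1/p}+\|v\,\nabla\phi\|_{L^p},
\end{equation*}
and then shrink the support of $\phi$ to a point $x_j$; the last term vanishes because $r^{-p}\int_{B_r}|v|^p\le C\|v\|_{L^{p^*}(B_r)}^p\to 0$, giving $S\nu_j^{p/p^*}\le\mu(\{x_j\})$ directly. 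Either way fills the gap you identified.
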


\begin{proposition}[{\cite[Prop.~3.3]{BTW}}]\label{lemben} 
Let $(v_n)_n$ be a bounded sequence in $D^{1,p}(\mathbb R^N,\mathbb R)$ and define
\begin{equation}\label{munuinf}
\nu_\infty=\lim_{R\to\infty} \limsup_{n\to\infty} \int_{|x|>R} |v_n|^{p^*} dx,\qquad
\mu_\infty=\lim_{R\to\infty} \limsup_{n\to\infty} \int_{|x|>R} |\nabla v_n|^{p} dx.
\end{equation}
Then, the quantities $\nu_\infty$ and $\mu_\infty$ exist and satisfy
\begin{equation}\label{unpstarinf}
\limsup_{n\to\infty} \int_{\mathbb R^N} |v_n|^{p^*} dx=\int_{\mathbb R^N} d\nu +\nu_\infty,
\end{equation}
\begin{equation}\label{dunpinf}
\limsup_{n\to\infty} \int_{\mathbb R^N} |\nabla v_n|^{p} dx=\int_{\mathbb R^N} d\mu +\mu_\infty,
\end{equation}
$$S\nu_{\infty}^{p/p^*}\le \mu_\infty,$$
where $\nu$ and $\mu$ are as in {\rm(i)} and {\rm(ii)} in Lemma \ref{lem4.1} and such that {\rm(iii)} holds.
\end{proposition}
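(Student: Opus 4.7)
The plan is to address the three assertions of Proposition \ref{lemben} in succession: existence of the limits $\nu_\infty,\mu_\infty$; the decomposition identities \eqref{unpstarinf}--\eqref{dunpinf}; and the Sobolev-type bound $S\nu_\infty^{p/p^*}\le\mu_\infty$ at infinity. Existence of the limits is immediate from monotonicity: for every fixed $n$ the quantity $\int_{|x|>R}|v_n|^{p^*}\,dx$ is nonnegative, nonincreasing in $R$, and uniformly bounded in $n$ by $\sup_m\|v_m\|_{p^*}^{p^*}<\infty$ thanks to the Sobolev embedding $D^{1,p}(\R^N)\hookrightarrow L^{p^*}(\R^N)$ applied to the bounded sequence $(v_n)_n$. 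Therefore $R\mapsto\limsup_n\int_{|x|>R}|v_n|^{p^*}\,dx$ is bounded and monotone nonincreasing, hence admits a limit as $R\to\infty$; the same reasoning applies verbatim to $\mu_\infty$.

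For the decomposition formulas I would fix a smooth cutoff $\psi_R\in C_c^\infty(\R^N;[0,1])$ with $\psi_R\equiv 1$ on $B_R$, $\supp\psi_R\subset B_{2R}$, and split
$$\int_{\R^N}|v_n|^{p^*}\,dx=\int_{\R^N}\psi_R|v_n|^{p^*}\,dx+\int_{\R^N}(1-\psi_R)|v_n|^{p^*}\,dx.$$
Since $\psi_R\in C_c(\R^N)$ and $|v_n|^{p^*}\stackrel{*}{\rightharpoonup}\nu$, the first term converges to $\int\psi_R\,d\nu$ as $n\to\infty$, and in turn $\int\psi_R\,d\nu\uparrow\int d\nu$ as $R\to\infty$ by monotone convergence. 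The second term is squeezed between $\int_{|x|>2R}|v_n|^{p^*}$ and $\int_{|x|>R}|v_n|^{p^*}$, so after taking $\limsup_n$ and letting $R\to\infty$ one obtains $\nu_\infty$ by definition. Passing to a subsequence along which both pieces converge and then combining yields \eqref{unpstarinf}; the identity \eqref{dunpinf} follows from the parallel argument with $|v_n|^{p^*}$ replaced by $|\nabla v_n|^p$ and $\nu$ by $\mu$.

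The main obstacle is the Sobolev inequality at infinity. I would choose a complementary cutoff $\phi_R\in C^\infty(\R^N;[0,1])$ with $\phi_R\equiv 0$ on $B_R$, $\phi_R\equiv 1$ on $\R^N\setminus B_{2R}$, and $|\nabla\phi_R|\le C/R$. Since $\phi_R v_n\in D^{1,p}(\R^N)$, the Sobolev inequality gives
$$S\Bigl(\int|\phi_R v_n|^{p^*}\,dx\Bigr)^{p/p^*}\le\int|\nabla(\phi_R v_n)|^p\,dx\le 2^{p-1}\Bigl(\int_{|x|>R}|\nabla v_n|^p\,dx+\frac{C^p}{R^p}\int_{R<|x|<2R}|v_n|^p\,dx\Bigr).$$
Hölder on the annulus $A_R:=\{R<|x|<2R\}$ with conjugate exponents $p^*/p$ and $N/p$ bounds $\int_{A_R}|v_n|^p\le CR^p\bigl(\int_{A_R}|v_n|^{p^*}\,dx\bigr)^{p/p^*}$, which exactly cancels the factor $R^{-p}$. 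The delicate point is that, after extracting a diagonal subsequence along which $R\mapsto\lim_n\int_{|x|>R}|v_n|^{p^*}\,dx$ exists on a countable dense set of radii, this function is monotone and converges to $\nu_\infty$; consequently $\int_{A_R}|v_n|^{p^*}\to 0$ along the subsequence as $R\to\infty$, so the annular remainder is negligible. Taking $\limsup_n$ on both sides, bounding the left-hand side below by $\limsup_n\int_{|x|>2R}|v_n|^{p^*}$ (using $\phi_R\equiv 1$ on $\R^N\setminus B_{2R}$) and then letting $R\to\infty$ via \eqref{unpstarinf}, one concludes $S\nu_\infty^{p/p^*}\le\mu_\infty$.
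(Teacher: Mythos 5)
The paper itself offers no proof of this proposition (it is quoted from \cite{BTW}), so your argument stands on its own. Your first two steps are fine and standard: monotonicity in $R$ of the tail integrals gives the existence of $\nu_\infty,\mu_\infty$, and the cutoff splitting with $\psi_R$ gives \eqref{unpstarinf}--\eqref{dunpinf}, since for fixed $R$ the compactly supported piece genuinely converges (so the limsup splits additively), and then monotone convergence for $\nu$ together with the squeeze between the $R$- and $2R$-tails finishes the job; no subsequence extraction is needed there.

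The gap is in the inequality at infinity. From the pointwise bound $|\nabla(\phi_R v_n)|^p\le 2^{p-1}\bigl(\phi_R^p|\nabla v_n|^p+|v_n|^p|\nabla\phi_R|^p\bigr)$ you can only conclude $S\nu_\infty^{p/p^*}\le 2^{p-1}\mu_\infty$: the factor $2^{p-1}>1$ multiplies the gradient term itself, not merely the annular remainder, and it survives the limits $n\to\infty$, $R\to\infty$. The sharp constant $S$ is precisely what the proposition asserts, and it is what Lemma~\ref{lem5} needs to produce the threshold $c_P$, so the loss is not cosmetic. The fix is to estimate norms rather than integrands: by Minkowski's inequality, $\|\nabla(\phi_R v_n)\|_p\le\|\phi_R\nabla v_n\|_p+\|v_n\nabla\phi_R\|_p$, and one then shows $\lim_{R\to\infty}\limsup_n\|v_n\nabla\phi_R\|_p=0$ (equivalently, use $(1+\varepsilon)|a|^p+C_\varepsilon|b|^p$ and let $\varepsilon\to0$ at the very end). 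For that vanishing, your diagonal-subsequence device is both unnecessary and risky: after passing to a subsequence the limsups defining $\nu_\infty$, and in particular the lower bound $\limsup_n\int_{|x|>2R}|v_n|^{p^*}dx$ you use on the left-hand side, may strictly decrease, so without choosing the subsequence to realize the limsup you would only obtain the inequality for a possibly smaller quantity than $\nu_\infty$. A cleaner route avoids subsequences altogether: by H\"older on the annulus $A_R=\{R<|x|<2R\}$, $\|v_n\nabla\phi_R\|_p^p\le C\bigl(\int_{A_R}|v_n|^{p^*}dx\bigr)^{p/p^*}$ with $C$ independent of $R$ (your computation), while testing the weak-$*$ convergence $|v_n|^{p^*}dx\rightharpoonup\nu$ against a cutoff equal to $1$ on $\overline{A_R}$ and supported in $\{R/2<|x|<4R\}$ gives $\limsup_n\int_{A_R}|v_n|^{p^*}dx\le\nu(\{|x|\ge R/2\})\to0$ as $R\to\infty$, because $\nu$ is a finite measure. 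With these two repairs your scheme yields $S\nu_\infty^{p/p^*}\le\mu_\infty$ as stated.
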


Let us recall the following version of the Mountain Pass Lemma.

\begin{theorem}[\cite{AR}]\label{mpthm}
Let $(Y,\|\cdot\|_Y)$ be a Banach space and consider $E\in C^1(Y)$. We assume that
\par (i) $E(0)=0$,
\par (ii) there exist $\alpha,R>0$ such that $E(u)\geq\alpha$ for all $u\in Y$, with $\|u\|_Y=R$,
\par (iii) there exists $v_0\in Y$ such that $\limsup_{t\to \infty}E(tv_0)<0$.
\par\noindent
Let $t_0>0$ be such that $\|t_0v_0\|_Y>R$ and $E(t_0v_0)<0$
and let
$$c=\inf_{\gamma\in\Gamma}\,\sup_{t\in [0,1]}E(\gamma(t)),$$
where
$$\Gamma=\{\gamma\in C^0([0,1],Y)\,/\, \gamma(0)=0\hbox{ and }\gamma(1)=t_0v_0\}.$$
Then, there exists a Palais--Smale sequence at level $c$, that is
 a sequence $(u_n)_n\subset Y$ such that
$$\lim_{n\to \infty}E(u_n)=c\quad\hbox{ and }\quad\lim_{n\to \infty}E'(u_n)= 0\quad\hbox{strongly in }Y'.$$
\end{theorem}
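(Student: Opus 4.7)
The plan is to realise the nontrivial weak solution as a critical point of the energy functional $J_A$ defined in \eqref{funct}, via the Mountain Pass Theorem~\ref{mpthm} applied on the Banach space $X = W^{1,p}_{A,V}(\R^N,\CC)$. First I would check that $J_A \in C^1(X)$ with differential as written after \eqref{for1}; this is routine from $(f_0)$, $(f_1)$, $(K)$, $(V)$ together with the embeddings \eqref{green}, since the subcritical part is controlled via \eqref{f0f1} and the critical weight $K$ is bounded. Then I would verify the two geometric assumptions of Theorem~\ref{mpthm}. For condition (ii), using \eqref{f0f1}, H\"older inequality on $h_3 \in L^{p^*/(p^*-k)}$, the Sobolev embedding \eqref{green} and $K\in L^\infty$, one obtains
\[
J_A(u)\ge \tfrac{1}{p}\|u\|^p - \tfrac{\xi}{p}C\|u\|^p - C_\xi\|u\|^k - C\|u\|^{p^*},
\]
and choosing $\xi$ small and $\|u\|=R$ small produces $J_A(u)\ge\alpha>0$ since $p<k<p^*$. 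For condition (iii), $(f_2)$ gives $F(x,s)\gtrsim s^{\theta/p}$ for large $s$ with $p<\theta<p^*$, so $J_A(tv_0)\to -\infty$ for any fixed $v_0\in X\setminus\{0\}$.

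Next, Theorem~\ref{mpthm} delivers a Palais--Smale sequence $(u_n)\subset X$ at the mountain pass level $c$. The standard Ambrosetti--Rabinowitz trick, using the combination $pJ_A(u_n)-\langle J_A'(u_n),u_n\rangle/\theta\cdot p$ and $(f_2)$ together with the sign of the critical term, yields boundedness of $\|u_n\|$. Up to subsequence $u_n\rightharpoonup u$ in $X$, and by \eqref{compem} one has $u_n\to u$ in $L^s_{\mathrm{loc}}$ for $s<p^*$; the subcritical term in the Euler--Lagrange identity then passes to the limit by Vitali/dominated convergence using $(f_1)$ and the weight $h_3\in L^{p^*/(p^*-k)}$.

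The essential obstacle, and the bulk of the work, is handling the critical term and the double lack of compactness. I would apply the diamagnetic inequality \eqref{kato} to the real sequence $|u_n|$ in $D^{1,p}(\R^N,\R)$ and combine Propositions~\ref{lem4.1} and~\ref{lemben} to extract point-concentration measures $\{(\nu_j,\mu_j)\}_{j\in J}$ and mass-at-infinity $(\nu_\infty,\mu_\infty)$, all tied by $S\nu_j^{p/p^*}\le\mu_j/\|K\|_\infty^{p/p^*}$ (and analogously at $\infty$) once the weight $K$ is incorporated. Using appropriate cut-off test functions $\eta_\varepsilon$ centered at $x_j$ and at infinity in $\langle J_A'(u_n),u_n\eta_\varepsilon\rangle\to 0$, exploiting the product rule \eqref{for1} to distribute $\nabla_A$ across the cut-off, one shows that either $\nu_j=0$ (resp.\ $\nu_\infty=0$) or $\nu_j\ge (S/\|K\|_\infty)^{N/p}$. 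Combined with an energy identity of the form $c = J_A(u) + \bigl(\tfrac1p-\tfrac1{p^*}\bigr)\sum_j K(x_j)\nu_j + \bigl(\tfrac1p-\tfrac1{p^*}\bigr)K_\infty\nu_\infty$ derived from $(f_2)$, any concentration would force $c \ge \tfrac{1}{N}S^{N/p}\|K\|_\infty^{-(N-p)/p}$. The strategy is therefore to prove that the mountain pass level lies strictly below this threshold; this is where $(f_3)$, the flatness condition $(K)$ and the Talenti extremals for \eqref{S} are put to use — one tests the mountain pass inequality along the path $t\mapsto tU_\varepsilon$, with $U_\varepsilon$ the Talenti bubble, and balances the critical gain coming from $(K)$ near $0$ against the loss from $(f_3)$ for small $\varepsilon$; the case distinctions on $N$ vs $p^2$ and $p\lessgtr 2$ in $(f_3)$ correspond exactly to which term dominates in this expansion.

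Once $\nu_j=0$ for all $j$ and $\nu_\infty=0$, the critical integral is continuous along $(u_n)$, hence $\langle J_A'(u_n),u_n-u\rangle\to 0$ together with the complex Simon-type inequality from Appendix~\ref{ineq} upgrades weak to strong convergence of $u_n\to u$ in $X$. Thus $u$ is a critical point of $J_A$ and $J_A(u)=c\ge\alpha>0$, so $u\not\equiv 0$ and $u$ is the desired nontrivial weak solution. The hardest step, both technically and conceptually, is the threshold estimate on $c$: it requires the delicate Talenti expansion in the magnetic/complex framework, where the magnetic phase must be absorbed harmlessly into the test function and the subcritical weight $F$ must contribute enough via $(f_3)$ to push $c$ strictly below $\tfrac{1}{N}S^{N/p}\|K\|_\infty^{-(N-p)/p}$.
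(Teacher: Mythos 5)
Your proposal does not address the statement at issue. The statement is the abstract Ambrosetti--Rabinowitz mountain pass theorem (Theorem \ref{mpthm}): for an arbitrary Banach space $Y$ and an arbitrary $E\in C^1(Y)$ satisfying (i)--(iii), there exists a Palais--Smale sequence at the min--max level $c$. What you wrote is instead a strategy for the paper's main existence result, Theorem \ref{main}, and in the course of it you invoke Theorem \ref{mpthm} itself as a black box (``Theorem~\ref{mpthm} delivers a Palais--Smale sequence\dots''). As a proof of the stated theorem this is circular: none of the steps you describe (the $C^1$ check for $J_A$, the mountain pass geometry, concentration compactness, the Talenti-bubble threshold, the complex Simon inequality) bears on the abstract assertion, which involves no compactness and no specific structure of the functional. (For the record, the paper does not prove this theorem either; it quotes it from \cite{AR}. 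What you sketched is essentially the content of Lemmas \ref{lembound}, \ref{mpt_geometry}, \ref{lem5}, \ref{c<csegnato} and the proof of Theorem \ref{main}, which is a different statement.)

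A genuine proof would run along classical lines. First, since $E(0)=0$, $E(t_0v_0)<0$ and $\|t_0v_0\|_Y>R$, every $\gamma\in\Gamma$ crosses the sphere $\{\|u\|_Y=R\}$ by continuity of $t\mapsto\|\gamma(t)\|_Y$, hence $\sup_{t\in[0,1]}E(\gamma(t))\ge\alpha$ and therefore $c\ge\alpha>0>E(t_0v_0)$, so the level $c$ strictly exceeds the values of $E$ at both endpoints of the paths. Then one concludes either by applying Ekeland's variational principle to the functional $\gamma\mapsto\sup_{t\in[0,1]}E(\gamma(t))$ on the complete metric space $\Gamma$ equipped with $d(\gamma_1,\gamma_2)=\max_{t\in[0,1]}\|\gamma_1(t)-\gamma_2(t)\|_Y$, or by contradiction via the quantitative deformation lemma: if no $(PS)_c$ sequence existed, there would be $\varepsilon,\delta>0$ such that $\|E'(u)\|_{Y'}\ge\delta$ whenever $|E(u)-c|\le2\varepsilon$; taking $\varepsilon<c/2$ so that the endpoints lie in $\{E\le c-2\varepsilon\}$, a deformation along a pseudo-gradient flow sends a path $\gamma$ with $\sup_{t}E(\gamma(t))\le c+\varepsilon$ to an admissible path $\tilde\gamma\in\Gamma$ with $\sup_{t}E(\tilde\gamma(t))\le c-\varepsilon$, contradicting the definition of $c$ as an infimum. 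Some argument of this type (or an explicit reduction to \cite{AR}) is what is missing from your proposal.
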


We end this section by defining a well-known family of functions and giving some useful estimates that will be needed to overcome difficulties coming from the critical exponent.
For any $\varepsilon>0$ and $x_0\in\R^N$, we consider the following family of instantons (see \cite{Tal})
\begin{equation}\label{scal}
U_{\varepsilon,x_0}(x) =  c_{p,N}\left[\frac{\varepsilon^{\frac{1}{p-1}}}{\varepsilon^{\frac{p}{p-1}}+|x-x_0|^{\frac{p}{p-1}}}\right]^{\frac{N-p}{p}}=\varepsilon^{-\frac{N-p}{p}} U\left(\frac{x-x_0}{\varepsilon}\right),
\end{equation}
where
\begin{equation}\label{cpndef}
U(x)=c_{p,N}\left(1+|x|^{\frac{p}{p-1}}\right)^{-\frac{N-p}{p}}, \qquad c_{p,N}=\left(N^{\frac{1}{p}}\left(\frac{N-p}{p-1}\right)^{\frac{p-1}{p}}\right)^{\frac{N-p}{p}}.
\end{equation}
In particular,
\begin{equation}\label{normep}
\|\nabla U_{\varepsilon, x_0}\|_p=\|\nabla U\|_p, \qquad \|U_{\varepsilon, x_0}\|_{p^*}=\|U\|_{p^*}
\end{equation}
 and it is well known (\cite{Tal}, \cite{sciunzi}, see also \cite{SY}), that $U_{\varepsilon,x_0}$ for all $\varepsilon>0$, are exactly all the regular radial solutions of the
critical $p$-Laplace equation
\begin{equation}\label{cri}
-\Delta_p u=u^{p^*-1},\qquad u > 0 \,\,\text{in}\,\, \R^N,\qquad u\in D^{1,p}(\R^N).
\end{equation}
From \cite{LionsRev1}, it follows that the family of functions given by \eqref{scal} are minimisers
to \eqref{S}, that is, for all $\varepsilon > 0$ and $x_0\in\mathbb R^N$, we have
$$\|\nabla U_{\varepsilon, x_0}\|_p^p=S\|U_{\varepsilon, x_0}\|_{p^*}^{p}.$$
Since $U_{\varepsilon, x_0}$ solves \eqref{cri}, as already mentioned, integration by parts and the definition of $c_{p,N}$ in \eqref{cpndef}, yields $\|\nabla U_{\varepsilon, x_0}\|_p^p=\|U_{\varepsilon, x_0}\|_{p^*}^{p^*}$, implying in view of \eqref{normep} that
\begin{equation}\label{Uepsx0}
S^{N/p}=\|U_{\varepsilon, x_0}\|_{p^*}^{p^*}.
\end{equation}
Let $0<\delta_\psi<\delta_K$, where $\delta_K$ is given in (K), and $\psi\in C^1$ radial such that 
\begin{equation}\label{defcut}
\psi=1\,\, \text{in}\,\, B_{\delta_\psi/2}(x_0),\qquad \psi=0 \,\, \text{in}\,\,B^c_{\delta_\psi}(x_0).
\end{equation}
Define $\tilde w_{\varepsilon, x_0}=\psi U_{\varepsilon, x_0}$ with $U_{\varepsilon, x_0}$ as in \eqref{scal} and 
\begin{equation}\label{defwtil}
w_{\varepsilon, x_0}=\tilde w_{\varepsilon, x_0}/\|\tilde w_{\varepsilon, x_0}\|_{p^*}. 
\end{equation}
As noted in \cite{ap87}, the asymptotic behaviour of $w_{\varepsilon, x_0}$ for $\varepsilon\to 0$ has to be like $U_{\varepsilon, x_0}$ and the corresponding functional must approximate $S$.
Finally, we report some useful estimates taken from \cite[Lem.~A5]{GP94}, 
\cite[pp.~45-46]{Peralcourse}, see also \cite{ap87, dh}.

\begin{lemma}\label{tallemma} Let $w_{\varepsilon, x_0}$ be defined in \eqref{defwtil}, then it holds
$$\|\nabla w_{\varepsilon, x_0}\|_p^p=S+O(\varepsilon^{\frac{N-p}{p-1}})$$
and
\begin{equation}\label{est_norm_q}\|w_{\varepsilon, x_0}\|_q^q=\begin{cases}O(\varepsilon^{N-\frac{N-p}{p}q}) &\text{if}\,\,q>\frac{N(p-1)}{N-p},\\
O(\varepsilon^{\frac{N-p}{p(p-1)}q}|\log\varepsilon|)&\text{if}\,\,q=\frac{N(p-1)}{N-p},\\
O(\varepsilon^{\frac{N-p}{p(p-1)}q}) &\text{if}\,\,q<\frac{N(p-1)}{N-p} .
\end{cases}\end{equation}
If particular, if $q=p$, we have
$$\|w_{\varepsilon, x_0}\|_p^p=\begin{cases}O(\varepsilon^p) &\text{if}\,\,N>p^2,\\
O(\varepsilon^{\frac{N-p}{p-1}}|\log\varepsilon|)&\text{if}\,\,N=p^2,\\
O(\varepsilon^{\frac{N-p}{p-1}}) &\text{if}\,\,N<p^2.
\end{cases}$$
\end{lemma}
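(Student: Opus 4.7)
The plan is to transfer estimates from the unnormalised $\tilde w_{\varepsilon,x_0}=\psi U_{\varepsilon,x_0}$ to $w_{\varepsilon,x_0}$ via the normalisation in \eqref{defwtil}. The key tool is the scaling $y=(x-x_0)/\varepsilon$, which by \eqref{scal} gives
$$
\int_{\R^N}\phi(x)\,U_{\varepsilon,x_0}(x)^q\,dx
=\varepsilon^{N-q(N-p)/p}\int_{\R^N}\phi(x_0+\varepsilon y)\,U(y)^q\,dy,
$$
together with the polynomial decay $U(y)=O(|y|^{-(N-p)/(p-1)})$ and $|\nabla U(y)|=O(|y|^{-(N-p)/(p-1)-1})$ as $|y|\to\infty$ read off from \eqref{cpndef}.

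First I would estimate $\|\tilde w_{\varepsilon,x_0}\|_{p^*}^{p^*}$: since $U^{p^*}\in L^1(\R^N)$ by \eqref{Uepsx0}, splitting the integral on $B_{\delta_\psi/2}(x_0)$ (where $\psi=1$) and on the annulus $B_{\delta_\psi}(x_0)\setminus B_{\delta_\psi/2}(x_0)$, the scaling bounds the missing mass by $\int_{|y|>\delta_\psi/(2\varepsilon)}U(y)^{p^*}dy$, which is a convergent tail of the form $O(\varepsilon^{N/(p-1)})$. For the gradient, expanding $\nabla(\psi U_{\varepsilon,x_0})=\psi\nabla U_{\varepsilon,x_0}+U_{\varepsilon,x_0}\nabla\psi$, the dominant contribution reproduces $\|\nabla U\|_p^p=S\|U\|_{p^*}^p$ by \eqref{S}, while the perturbative terms (the tail cutoff plus the contributions from $\nabla\psi$, supported in the fixed annulus) sum to $O(\varepsilon^{(N-p)/(p-1)})$. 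Dividing by the $p^*$-norm and using $(1+O(\varepsilon^{N/(p-1)}))^{-p/p^*}=1+O(\varepsilon^{(N-p)/(p-1)})$ yields the first assertion.

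For the $L^q$ estimates, the same rescaling reduces matters to controlling
$$
\|w_{\varepsilon,x_0}\|_q^q\;\sim\;\varepsilon^{N-q(N-p)/p}\int_{|y|<\delta_\psi/\varepsilon}\psi(x_0+\varepsilon y)^q\,U(y)^q\,dy.
$$
At infinity $U^q\sim|y|^{-q(N-p)/(p-1)}$, so the threshold for integrability is exactly $q=N(p-1)/(N-p)$. When $q$ exceeds this threshold the inner integral converges to $\|U\|_q^q<\infty$, giving the first line of \eqref{est_norm_q}. At the threshold, spherical coordinates produce $\log(\delta_\psi/\varepsilon)$, explaining the $|\log\varepsilon|$ factor. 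Below the threshold the radial integral $\int^{\delta_\psi/\varepsilon}r^{N-1-q(N-p)/(p-1)}dr$ equals a constant multiple of $(\delta_\psi/\varepsilon)^{N-q(N-p)/(p-1)}$, and combining exponents yields precisely $\varepsilon^{q(N-p)/(p(p-1))}$. The special case $q=p$ then follows by noting that the condition $p>N(p-1)/(N-p)$ is equivalent to $N>p^2$, and analogously at equality and below.

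The main technical obstacle is keeping the gradient estimate sharp enough in the case $p\ne 2$. Concretely, one needs to control the cross term $\int\psi^{p-1}|\nabla U_{\varepsilon,x_0}|^{p-2}\nabla U_{\varepsilon,x_0}\cdot\nabla\psi\,U_{\varepsilon,x_0}\,dx$ arising from the elementary bound $|a+b|^p\le|a|^p+C(|a|^{p-1}|b|+|b|^p)$, and this requires tracking the decay of $\nabla U_{\varepsilon,x_0}$ and $U_{\varepsilon,x_0}$ on the fixed annulus $\{\delta_\psi/2<|x-x_0|<\delta_\psi\}$ after the $\varepsilon$-rescaling. This interplay is exactly what produces the optimal exponent $(N-p)/(p-1)$ rather than some coarser bound, and is the most delicate computation in the proof.
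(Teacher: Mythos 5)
The paper does not prove Lemma~\ref{tallemma}; it only cites \cite[Lem.~A5]{GP94} and \cite[pp.~45--46]{Peralcourse}. Your proposal is a correct reconstruction of the standard argument found there (rescaling $y=(x-x_0)/\varepsilon$, polynomial decay of the Talenti profile $U$ and $\nabla U$, tail and cutoff-annulus estimates, then dividing by $\|\tilde w_{\varepsilon,x_0}\|_{p^*}$), and the exponent bookkeeping — the $O(\varepsilon^{N/(p-1)})$ tail of $U^{p^*}$, the $O(\varepsilon^{(N-p)/(p-1)})$ gradient errors, the threshold $q=N(p-1)/(N-p)$ and the equivalence $p>N(p-1)/(N-p)\Leftrightarrow N>p^2$ — all checks out.
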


\begin{remark}\label{weW1p}
We point out that, as a consequence of Lemma \ref{tallemma}, we have $w_{\varepsilon,0}\in W^{1,p}(\mathbb R^N)$, for any $\varepsilon>0$ and $x_0\in \R^N$.
\end{remark}

For simplicity, from now on, if $x_0=0$, we call $w_{\varepsilon, 0}$, $\tilde w_{\varepsilon, 0}$, $U_{\varepsilon, 0}$ as, respectively, $w_{\varepsilon}$, $\tilde w_\varepsilon$, $U_\varepsilon$.

\section{ Mountain pass framework}\label{pssec}

We start this section by giving the definition of $(PS)_c$ sequence. 

\begin{definition} 
Let $Y$ be a Banach space and $E: Y\to\mathbb{R}$ be a differentiable functional. A sequence
$(u_{n})_{n}\subset Y$ is called a $(PS)_{c}$ sequence for $E$ if $ E(u_{k})\to c$ and $E'(u_{k})\to 0$
as $k\to\infty$.
Moreover, we say that $E$ satisfies the $ (PS)_{c}$ condition if every $(PS)_{c}$ sequence for $E$ has a converging subsequence in $Y$.
\end{definition} 

First, we investigate the boundedness of every Palais--Smale sequence in our case.
\begin{lemma}\label{lembound} 
Let $(u_n)_n\subset X$ be a Palais--Smale sequence for $J_A$, then 
$(u_n)_n$ is bounded in $X$ for any $c\in\mathbb R$.
\end{lemma}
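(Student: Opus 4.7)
The natural strategy is the classical Ambrosetti--Rabinowitz trick: estimate $\|u_n\|^p$ by the linear combination $J_A(u_n)-\frac{1}{\theta}\langle J_A'(u_n),u_n\rangle$, exploiting hypothesis $(f_2)$ to absorb the reaction terms. First, I would evaluate $\langle J_A'(u_n),u_n\rangle$: taking $v=u_n$ in the expression for $\langle J_A'(u),v\rangle$, note that $\langle u_n,u_n\rangle_{\mathbb{C}^N}=|u_n|^2$ is real, so $r_{A,p}(u_n,u_n)=\|u_n\|^p$ and the reaction integrals reduce to $\int K|u_n|^{p^*}dx$ and $\int f(x,|u_n|^p)|u_n|^p\,dx$, without any real-part gymnastics.

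Combining the formulas for $J_A$ and $\langle J_A'(u_n),u_n\rangle$ one obtains
\begin{equation*}
J_A(u_n)-\frac{1}{\theta}\langle J_A'(u_n),u_n\rangle
=\Bigl(\tfrac{1}{p}-\tfrac{1}{\theta}\Bigr)\|u_n\|^p
+\Bigl(\tfrac{1}{\theta}-\tfrac{1}{p^*}\Bigr)\!\int_{\mathbb{R}^N}\! K(x)|u_n|^{p^*}dx
+\!\int_{\mathbb{R}^N}\!\Bigl[\tfrac{1}{\theta}f(x,|u_n|^p)|u_n|^p-\tfrac{1}{p}F(x,|u_n|^p)\Bigr]dx.
\end{equation*}
Here the choice $p<\theta<p^*$ guarantees that the first two coefficients are strictly positive, while $(f_2)$ applied pointwise with $t=|u_n|^p$ gives $\tfrac{1}{\theta}f(x,|u_n|^p)|u_n|^p\ge\tfrac{1}{p}F(x,|u_n|^p)$, making the last integral nonnegative. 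Since $K\ge0$ by $(K)$, all three contributions are nonnegative and we get the clean lower bound
\begin{equation*}
\Bigl(\tfrac{1}{p}-\tfrac{1}{\theta}\Bigr)\|u_n\|^p
\;\le\;J_A(u_n)-\tfrac{1}{\theta}\langle J_A'(u_n),u_n\rangle.
\end{equation*}

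On the other hand, the Palais--Smale assumption yields $J_A(u_n)=c+o(1)$ and $\|J_A'(u_n)\|_{X'}=\varepsilon_n\to0$, so $|\langle J_A'(u_n),u_n\rangle|\le\varepsilon_n\|u_n\|$. Inserting this into the previous inequality gives
\begin{equation*}
\Bigl(\tfrac{1}{p}-\tfrac{1}{\theta}\Bigr)\|u_n\|^p
\;\le\;c+o(1)+\tfrac{\varepsilon_n}{\theta}\|u_n\|,
\end{equation*}
and since $p>1$ with $\varepsilon_n\to 0$, Young's inequality (or a direct comparison of growth rates) forces $(\|u_n\|)_n$ to remain bounded.

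I do not expect genuine obstacles here: the only points needing a little care are the reality of $\langle u_n,u_n\rangle_{\mathbb{C}^N}$ when plugging $v=u_n$, and the verification that $(f_2)$ indeed implies $\tfrac{1}{\theta}ft-\tfrac{1}{p}F\ge 0$ on the full half-line $\mathbb{R}^+$ so that the integral sign is controlled without any cutoff. Both are immediate from the statements, and the conclusion holds for every $c\in\mathbb{R}$ as claimed.
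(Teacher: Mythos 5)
Your proof is correct and follows essentially the same Ambrosetti--Rabinowitz argument as the paper: form $J_A(u_n)-\tfrac{1}{\theta}\langle J_A'(u_n),u_n\rangle$, use $p<\theta<p^*$, $(K)$, and $(f_2)$ to discard the reaction terms, and conclude from $\|u_n\|^p\lesssim c+o(1)+o(1)\|u_n\|$ with $p>1$. The explicit remark about $\langle u_n,u_n\rangle_{\mathbb{C}^N}$ being real is a nice touch but is implicitly used in the paper as well.
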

\begin{proof}
Let $(u_{n})_{n}\subset X$ be a $(PS)_{c}$ sequence of $J_A$ for all $c\in\mathbb{R}$
 namely, 
\begin{equation}\label{PS_prop}J_A(u_{n})=c+o(1), \,\ J'_A(u_{n})=o(1)\quad \text{as} \quad n\to\infty,\end{equation}
so that $|\langle J'_A(u_{n}),u_{n}\rangle|\le \|u_{n}\|$ for $n$ large.
In particular, by $(f_2)$, $(K)$ and being $p<\theta<p^*$ we have
$$\begin{aligned}
c +o(1) +o(1)\|u_{n}\|&= J_A(u_{n})- \frac{1}{\theta}\langle J'_A(u_{n}),u_{n}\rangle\\
&= \left(\frac{1}{p}-\frac{1}{\theta}\right)\| u_n\|^p
-\biggl(\frac 1{p^*} -\frac1\theta \biggr)\int_{\mathbb R^N}K(x)|u_n|^{p^*}dx\\&\quad-\frac1p \int_{\mathbb R^N}F(x,|u_n|^p)dx+\frac{1}{\theta} \int_{\mathbb R^N} f(x,|u_n|^p)|u_n|^{p} dx\\
&\ge \left(\frac{1}{p}-\frac{1}{\theta}\right)\| u_n\|^p.
\end{aligned}$$
Thus, it immediately follows that
$\|u_n\|$ should be bounded, being $p>1$.
\end{proof}

Inspired by \cite{aAS} and \cite{dl07} we prove the following.

\begin{lemma}\label{mpt_geometry}
The functional $J_A$ has the Mountain Pass geometry, namely the assumptions of Theorem \ref{mpthm} are satisfied.
\end{lemma}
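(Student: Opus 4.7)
The plan is to check in order the three hypotheses of Theorem~\ref{mpthm} with $Y=X$ and $E=J_A$. Item (i) is immediate from \eqref{funct}, since $J_A(0)=0$.

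For (ii), the starting observation is that integrating \eqref{f0f1} in the second variable yields, for every $\xi>0$, a constant $C_\xi'>0$ such that
\begin{equation*}
F(x,t)\leq \xi\,t+C_\xi'\,h_3(x)\,t^{k/p},\qquad (x,t)\in\mathbb R^N\times\mathbb R^+_0.
\end{equation*}
Substituting $t=|u|^p$, integrating over $\mathbb R^N$, and applying H\"older's inequality with conjugate exponents $p^*/(p^*-k)$ and $p^*/k$ (well posed since $p<k<p^*$), together with $h_3\in L^{p^*/(p^*-k)}(\mathbb R^N)$ from $(f_1)$ and the embeddings \eqref{green}, gives
\begin{equation*}
\int_{\mathbb R^N}F(x,|u|^p)\,dx\leq \xi\,C\,\|u\|^p+C_\xi''\,\|u\|^k.
\end{equation*}
Likewise, $(K)$ and \eqref{green} yield $\int_{\mathbb R^N}K(x)|u|^{p^*}\,dx\leq C\,\|u\|^{p^*}$. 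Inserting both estimates into \eqref{funct} and choosing $\xi$ small enough produces
\begin{equation*}
J_A(u)\geq \frac{1}{2p}\|u\|^p - C_1\|u\|^k - C_2\|u\|^{p^*}.
\end{equation*}
Since $k,p^*>p$, the right-hand side is strictly positive for $\|u\|=R$ with $R>0$ sufficiently small, supplying the desired $\alpha>0$.

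For (iii), I would exploit $(K)$: since $K(0)=\|K\|_\infty>0$ and $K$ is continuous, there exists $r\in(0,\delta_K)$ such that $K\geq K(0)/2$ on $B_r$. Fix any nonzero $v_0\in C_c^\infty(B_r,\mathbb C)\subset X$. Hypothesis $(f_0)$ gives $f\geq 0$, hence $F\geq 0$, so
\begin{equation*}
J_A(tv_0)\leq \frac{t^p}{p}\|v_0\|^p-\frac{K(0)}{2p^*}\,t^{p^*}\,\|v_0\|_{p^*}^{p^*}\xrightarrow[t\to\infty]{}-\infty,
\end{equation*}
because $p^*>p$ and $\|v_0\|_{p^*}>0$. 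Picking $t_0>0$ large enough that $\|t_0 v_0\|>R$ and $J_A(t_0 v_0)<0$ closes the verification.

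The only point demanding care is item (ii), where one must coordinate the three growth regimes supplied by $(f_0)$, $(f_1)$ and the critical term so that the $\|u\|^p$ contribution is not absorbed. The complex-valued and magnetic character of $X$ is not an obstruction here: all nonlinear integrals depend only on $|u|$, and \eqref{green}, which ultimately rests on the diamagnetic inequality \eqref{kato}, reduces every estimate to its scalar analogue.
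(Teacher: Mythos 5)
Your proposal is correct and verifies (i)--(iii), but it differs from the paper in both (ii) and (iii).

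For (ii), you obtain the needed growth bound on $F$ by integrating \eqref{f0f1} in the second variable, which is direct and clean. The paper takes a more circuitous path: l'Hospital's rule via $(f_0)$ near zero, $(f_2)$ combined with $(f_1)$ at infinity, and then a separate verification that an auxiliary quotient $g_\delta(x,u)$ is bounded in order to patch the two regimes together. The integrated bound you use is in fact hinted at in the paper right after \eqref{f0f1} (``a similar property holds also for $F$''), so your route is arguably the more natural one; both lead to the same mountain valley estimate.

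For (iii), the two proofs swap the roles of the two sign-definite terms: you drop the nonnegative $F$-contribution and drive $J_A(tv_0)\to-\infty$ through the critical $K$-term, whereas the paper drops the nonnegative $K$-contribution and uses the lower bound $(f_3)$ on $F$. Your argument forces a localized test function $v_0$ supported where $K\ge K(0)/2>0$, and this is perfectly adequate for Theorem~\ref{mpthm}(iii), which only asks for one such $v_0$. However, the paper's argument actually establishes the stronger statement that $J_A(tu)\to-\infty$ as $t\to\infty$ for \emph{every} $u\in X\setminus\{0\}$, and this stronger form is what is invoked in Remark~\ref{culambda} to justify $c_A\ge c_M$. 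Your $K$-based argument cannot be extended to arbitrary $u$ (since $(K)$ does not preclude $K$ from vanishing on a set of positive measure where $u$ could be supported), so in the paper's overall chain of reasoning $(f_3)$ cannot be bypassed at this stage. Worth flagging if you intend your proof as a drop-in replacement.
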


\begin{proof}
We have to verify the hypotheses $(i)-(iii)$ of the Mountain Pass Lemma.
From Section \ref{prel}, $J_A\in C^1(X)$ and clearly  $(i)$ is satisfied since $J_A(0)=0$.
Now we prove $(ii)$ of Theorem
\ref{mpthm}. 
By l'Hospital rule and $(f_0)$, we have
$$\lim_{u\to0}\frac{F(x,|u|^p)}{|u|^p}=
\lim_{u\to0}\frac{f(x,|u|^p)p|u|^{p-2}u}{p|u|^{p-2}u}=
\lim_{u\to0}f(x,|u|^p)=f(x,0)=0,$$
so that  for all $\delta>0$ then $F(x,|u|^p)\le \delta|u|^p$ for $|u|$ small and any $x\in \R^N$.
On the other hand, by $(f_2)$ and $(f_1)$, being $p<\theta$,
$$F(x,|u|)\le \frac{p}{\theta} |u|f(x, |u|) \le h_1(x)|u|+h_2(x)|u|^{k/p},$$
so that for $|u|$ large, since $k>p$ we have
$F(x,|u|)\le h_3(x) |u|^{k/p}
$,   with $h_3=h_1(x)+h_2(x)>0$ in $\mathbb R^N$ by $(f_1)$.
In turn, for all $\delta>0$, sufficiently small, there exists $c_\delta>0$ 
$$F(x,|u|^p)\le  \delta  |u|^p+c_\delta h_3(x)|u|^k
$$
for all $u\in X$ and $x\in\mathbb R^N$. 
Indeed, it easily follows from the boundedness from above in $\mathbb R^N\times \mathbb R^+$ of the function 
$$g_\delta(x,u):=\biggr[\frac{F(x,|u|^p)}{|u|^p}-\delta\biggr]\frac1{h_3(x)|u|^{k-p}}$$
being 
$$g_\delta(x,u)\sim - \frac\delta {h_3(x)|u|^{k-p}} \mbox{ if } u\to 0^+,\qquad g_\delta(x,u)\le  1 - \frac\delta {h_3(x)|u|^{k-p}} \mbox{ if } |u|\to \infty
$$
yielding $\sup_{\mathbb R^N\times \{|u|\le1\}}g_\delta(x,u)\le0$
and $\sup_{\mathbb R^N\times \{|u|>>1\}}g_\delta(x,u)\le 1$. On the other hand  if $0< A\le |u|\le B<\infty $
$$g_\delta(x,u)\le \frac{h_1(x)}{h_3(x)|u|^{k-p}}+\frac{h_2(x)}{h_3(x)}-\frac\delta{h_3(x)|u|^{k-p}} \le A^{p-k}+1-\delta A^{p-k} \frac 1{h_3(x)},
$$
being $h_1,\, h_2\le h_3$, then $\sup_{\mathbb R^N\times [A,B]}g_\delta(x,u)\le A^{p-k}+1$.

Now we use Sobolev's inequality,  
\eqref{green} and the calculations above to get
$$\begin{aligned}
J_A(u)&= \frac1p\|u\|^p-\frac1{p^*}\int_{\mathbb R^N}K(x)|u|^{p^*}dx-\frac{1}{p}\int_{\mathbb R^N}F(x,|u|^p)dx,
\\&\ge \frac1p\|u\|^p-\frac{\|K\|_{\infty}}{p^*C_{p^*}^{p^*}}\|u\|^{p^*}-\delta\|u\|_p^p- c\|h_3\|_{p^*/(p^*-k)}\|u\|_k^k\\
&\ge \|u\|^p\left[C_1- C_2\|u\|^{p^*-p}-C_3\|u\|^{k-p}\right],
\end{aligned}$$
with
$$C_1=\frac 1p-C^p\delta, \qquad C_2=\frac{\|K\|_{\infty}}{p^*C_{p^*}^{p^*}}, \qquad C_3=cC^k\|h_3\|_{p^*/(p^*-k)}.$$
In particular, choosing $\delta$ sufficiently small, we have $C_1>0$. Therefore, being $p<k<p^*$, there exist $\alpha, R>0$ with $R$ small enough,
 so that $J_A(u)\ge\alpha>0$
whenever $\|u\|=R$. Thus, condition $(ii)$ of Theorem \ref{mpthm} is satisfied.

Now, let $u\in X\setminus\{0\}$ then, using $(f_3)$ and $(K)$ we have
 
\begin{equation}\label{iii}
J_A(tu)\le \frac{t^p}{p}\|u\|^p -\frac{1}{p}\int_{\mathbb R^N}F(x,t^p|u|^p)dx
\le {t^p}\|u\|^p -Ct^q\int_{\mathbb R^N}|u|^q dx, 
\end{equation}
 
yielding immediately $J_A(tu)\to -\infty$ as $t\to \infty$, thanks to \eqref{green} and $q\in (p,p^*)$.
Thus, choosing $t_u>0$ such that $J_A(tu)<0$ for all $t\ge t_u$ and $\|t_u u\|>R$, then the proof of $(iii)$ is concluded.

Consider
$$\Gamma_u=\{\gamma\in C^0([0,1],X)\,/\, \gamma(0)=0\hbox{ and }\gamma(1)=t_u u\},$$
and
\begin{equation}\label{cu}
c_M=\inf_{\gamma\in\Gamma_u}\,\sup_{t\in [0,1]}J_A(\gamma(t)).
\end{equation}
Since the hypotheses of Theorem \ref{mpthm} are satisfied, there exists a $(PS)_{c_M}$ sequence.
\end{proof}

Now we claim that $J_A$ has the compactness property given by the validity of the $(PS)$ condition for levels $c$ below a suitable positive threshold.

\begin{lemma}\label{lem5}
Define 
\begin{equation}\label{csegnato}
c_P=\frac{S^{N/p}}{N\|K\|_\infty^{N/p^*}}.
\end{equation}
Then, the energy functional $J_A$ satisfies the $(PS)_c$ condition for every $c<c_P$.
\end{lemma}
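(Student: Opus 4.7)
My plan is to follow the classical Brezis--Nirenberg template adapted to the magnetic quasilinear setting. I would take a $(PS)_c$ sequence $(u_n) \subset X$ with $c < c_P$ and prove the existence of a subsequence converging strongly in $X$. The argument proceeds in four stages: compactness of the weak limit, concentration-compactness bookkeeping, elimination of atoms via the level constraint, and the upgrade of weak convergence to strong convergence.

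First, Lemma \ref{lembound} yields that $(u_n)$ is bounded in $X$, so along a subsequence $u_n \rightharpoonup u$ in $X$, $u_n \to u$ in $L^s_{\mathrm{loc}}$ for every $s \in [1,p^*)$ by \eqref{compem}, and $u_n \to u$ a.e. The diamagnetic inequality \eqref{kato} implies that $(|u_n|)$ is bounded in $D^{1,p}(\R^N,\R)$. I then apply Propositions \ref{lem4.1} and \ref{lemben} to $(|u_n|)$: this produces nonnegative bounded measures $\mu,\nu$ on $\R^N$, an at most countable index set $J$ with points $x_j \in \R^N$, and numbers $\mu_j,\nu_j,\mu_\infty,\nu_\infty \ge 0$ satisfying $\mu \ge |\nabla |u||^p + \sum_j \mu_j \delta_{x_j}$, $\nu = |u|^{p^*} + \sum_j \nu_j \delta_{x_j}$, $S\nu_j^{p/p^*} \le \mu_j$ and $S\nu_\infty^{p/p^*} \le \mu_\infty$.

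The central step is the reverse estimates $\mu_j \le K(x_j)\nu_j$ at each $x_j$ and $\mu_\infty \le \|K\|_\infty \nu_\infty$. I would test $\langle J_A'(u_n), \varphi u_n\rangle = o(1)$ against a real-valued cutoff $\varphi$ concentrated near $x_j$ (respectively supported outside a large ball). Since $\varphi$ is real, formula \eqref{for1} gives $\nabla_A(\varphi u_n) = \varphi \nabla_A u_n + u_n \nabla \varphi$, and the only delicate contribution is the cross term $\Re \int |\nabla_A u_n|^{p-2} \langle \nabla_A u_n, u_n \nabla\varphi\rangle_{\CC^N}$, which I would control by H\"older's inequality together with local strong $L^p$-convergence of $u_n$ and the vanishing of $\varepsilon^{-p}\int_{B_\varepsilon}|u|^p$ as $\varepsilon\to 0$ (a consequence of $u \in L^{p^*}$). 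The subcritical and the $V$-contributions vanish as the support of $\varphi$ shrinks by $(f_0)$, $(f_1)$, \eqref{f0f1} and the integrability $V|u|^p \in L^1$, while the critical term converges to $K(x_j)\nu_j$ by continuity of $K$ and weak-$*$ convergence of $|u_n|^{p^*}$. Combining $\mu_j \le K(x_j)\nu_j$ with $S\nu_j^{p/p^*} \le \mu_j$ forces $\nu_j = 0$ or $\mu_j \ge S^{N/p}/K(x_j)^{(N-p)/p}$, and analogously at infinity with $K(x_j)$ replaced by $\|K\|_\infty$.

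Suppose, for a contradiction, that some $\nu_{j_0} > 0$ (the case $\nu_\infty > 0$ is parallel). Using $(f_2)$ I rewrite
$$c + o(1) = J_A(u_n) - \tfrac{1}{\theta}\langle J_A'(u_n), u_n\rangle = \Bigl(\tfrac{1}{p}-\tfrac{1}{\theta}\Bigr)\|u_n\|^p + \Bigl(\tfrac{1}{\theta}-\tfrac{1}{p^*}\Bigr)\int_{\R^N} K|u_n|^{p^*} + \tfrac{1}{\theta}\int_{\R^N}\bigl[f(x,|u_n|^p)|u_n|^p - \tfrac{\theta}{p}F(x,|u_n|^p)\bigr],$$
each summand being nonnegative (since $p<\theta<p^*$ and $(f_2)$). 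Passing to the $\liminf$ via the diamagnetic inequality together with the upper bound $\mu_{j_0} \le K(x_{j_0})\nu_{j_0}$ yields
$$c \ge \Bigl(\tfrac{1}{p}-\tfrac{1}{\theta}\Bigr)\mu_{j_0} + \Bigl(\tfrac{1}{\theta}-\tfrac{1}{p^*}\Bigr)\mu_{j_0} = \frac{\mu_{j_0}}{N} \ge \frac{S^{N/p}}{N\,\|K\|_\infty^{N/p^*}} = c_P,$$
contradicting $c < c_P$. Hence no atoms survive at finite points or at infinity, so Brezis--Lieb together with tightness of $(|u_n|^{p^*})$ gives $K|u_n|^{p^*}\to K|u|^{p^*}$ in $L^1(\R^N)$, while the subcritical integrals involving $f$ converge by $(f_0)$, $(f_1)$, \eqref{f0f1} and \eqref{compem}. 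Since $\langle J_A'(u_n) - J_A'(u), u_n - u\rangle \to 0$, the proof reduces to $r_{A,p}(u_n, u_n - u) - r_{A,p}(u, u_n - u) \to 0$. The main obstacle, and the technical novelty, is that the standard Simon-type inequality turning such a bilinear difference into a bound on $\|u_n - u\|^p$ is stated for real vector fields only; I would close the argument by invoking the complex-valued counterpart established in Appendix \ref{ineq}, which delivers $u_n \to u$ strongly in $X$.
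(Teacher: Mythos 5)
Your proposal is correct and follows essentially the same route as the paper: boundedness from Lemma \ref{lembound}, concentration-compactness bookkeeping for $(|u_n|)$ through the diamagnetic inequality, the reverse bound $\mu_j\le K(x_j)\nu_j$ obtained by testing $J_A'(u_n)$ against a cutoff times $u_n$, elimination of atoms (and of concentration at infinity) by the level constraint $c<c_P$, and upgrading weak to strong convergence via the complex Simon-type inequality of Appendix \ref{ineq}. The only cosmetic difference is that in the contradiction step you subtract $\tfrac1\theta\langle J_A'(u_n),u_n\rangle$ and then use the diamagnetic inequality to absorb the leftover $\|u_n\|^p$ term, whereas the paper subtracts $\tfrac1p\langle J_A'(u_n),u_n\rangle$, which makes that gradient term cancel outright.
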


\begin{proof}
Let $(u_n)_n$ be a $(PS)_c$ sequence in $X$, that is \eqref{PS_prop} holds. By Lemma \ref{lembound}, then $(u_n)_n$ is bounded in $X$. Thus, by \eqref{kato}, $(|u_n|)_n$ is bounded in $W^{1,p}_{V}(\mathbb R^N, \mathbb R)$, which is a reflexive Banach space, see \cite{AP}. 
By Banach-Alaoglu's Theorem,
 there exists $|u|\in W^{1,p}_{V}(\mathbb R^N, \mathbb R)$ 
 such that, up to subsequences, we get
\begin{enumerate}
\item[(I)] $|u_{n}|\rightharpoonup |u|$ in $W^{1,p}_{V}(\mathbb R^N, \mathbb R)$.
\item[(II)] Since $\nabla |u_{n}|\rightharpoonup \nabla |u|$ in $L^{p}(\mathbb{R}^N)$, the sequence of measures
 $(|\nabla |u_n||^p dx)_n$ is bounded and
$$|\nabla |u_{n}||^{p}dx\rightharpoonup \mu,$$
\item[(III)] analogously,
$$|u_n|^{p^*}dx\rightharpoonup \nu,$$
\end{enumerate}
where $\mu, \nu$ are bounded nonnegative measures on $\mathbb R^N$.
By Proposition \ref{lemben}, there exist at most countable set $J$, a family $(x_j)_{j\in J}$ of distinct points in
$\mathbb R^N$ and two families $(\nu_j)_{j\in J}, \,(\mu_j)_{j\in J}\in ]0,\infty[$ such that
\eqref{unpstarinf}, \eqref{dunpinf} hold, with $\nu_\infty$, $\mu_\infty$ defined in \eqref{munuinf}, satisfying
\begin{equation}\label{6.22}
S\nu_{j}^{p/p^{*}}\le\mu_{j}, \qquad S\nu_{\infty}^{p/p^*}\le \mu_\infty.
\end{equation}

Take a standard cut-off function $\psi\in C_{c}^{\infty}(\mathbb{R}^N)$, such that $0\le\psi\le1$ in $\mathbb{R}^N$,
$\psi=0$ for $|x|>1$, $\psi=1$ for $|x|\le 1/2$. For each index $j\in J$ and each $0<\varepsilon<1$, define
$$\psi_{\varepsilon}(x)=\psi\left(\frac{x-x_{j}}{\varepsilon}\right).$$

Since $J'_{A}(u_{n})\psi\to0$ being $(u_{n})_n$ a $(PS)_c$ sequence and 
choosing $\psi=\psi_\varepsilon u_n$, which is still bounded,  we have, as $n\to\infty$,
\begin{equation}\label{dis1}
\begin{aligned}
\Re \biggl(\int_{\mathbb R^N} &|\nabla_A u_n|^{p-2}\langle\nabla_A u_n,\nabla_A (u_n\psi_\varepsilon) \rangle_{\CC^N}  dx\biggr)+\int_{\mathbb R^N} V(x)|u_n|^{p} \psi_\varepsilon dx\\
&=\int_{\mathbb R^N}K(x)|u_n|^{p^*}\psi_\varepsilon dx+\int_{\mathbb R^N} f(x,|u_n|^p)|u_n|^p\psi_\varepsilon  dx+o(1).
\end{aligned}
\end{equation}
Now, using \eqref{for1} with $u=u_n$ and $\eta=\psi_\varepsilon$

$$\begin{aligned}
\Re &\biggl(\int_{\mathbb R^N} |\nabla_A u_n|^{p-2}\langle\nabla_A u_n,\nabla_A (u_n\psi_\varepsilon) \rangle_{\CC^N} dx\biggr)\\&=\Re \biggl(\int_{\mathbb R^N} |\nabla_A u_n|^{p-2}\nabla_A u_n\cdot (\overline{u_n}\nabla \psi_\varepsilon+\psi_\varepsilon\overline{\nabla_A u_n})dx\biggr)\\
&
=\int_{\mathbb R^N} |\nabla_A u_n|^p
\psi_\varepsilon dx+
\Re \biggl(\int_{\mathbb R^N}\overline{u_n}|\nabla_A u_n|^{p-2}\nabla_A u_n\cdot\nabla \psi_\varepsilon dx\biggr)\end{aligned}$$
 
so that, by \eqref{kato} and $(V)$, then \eqref{dis1} becomes 
\begin{equation}\label{nujbec}\begin{aligned}
&\int_{\mathbb R^N} |\nabla|u_n||^p
\psi_\varepsilon dx+
\Re \biggl(\int_{\mathbb R^N}\overline{u_n}|\nabla_A u_n|^{p-2}\nabla_A u_n\cdot\nabla \psi_\varepsilon dx\biggr) \\
&\le \int_{\mathbb R^N} |\nabla_A u_n|^p
\psi_\varepsilon dx+
\Re \biggl(\int_{\mathbb R^N}\overline{u_n}|\nabla_A u_n|^{p-2}\nabla_A u_n\cdot\nabla \psi_\varepsilon dx\biggr)+\int_{\mathbb R^N} V(x)|u_n|^{p} \psi_\varepsilon dx\\
&=\int_{\mathbb R^N}K(x)|u_n|^{p^*}\psi_\varepsilon dx+\int_{\mathbb R^N} f(x,|u_n|^p)|u_n|^p\psi_\varepsilon  dx+o(1).
\end{aligned}\end{equation}

Applying H\"older inequality, 
\begin{equation}\label{6.4}
\begin{aligned}
\biggl|\int_{\mathbb R^N}&\overline{u_n}|\nabla_A u_n|^{p-2}\nabla_A u_n\cdot\nabla \psi_\varepsilon dx\biggr|\\&\le \biggl(\int_{B_{\varepsilon}(x_j)\setminus B_{\varepsilon/2}(x_j)}|u_n|^p|\nabla \psi_\varepsilon|^pdx\biggr)^{1/p}\|\nabla_A u_n\|_p^{p-1} \\
&\le \| u_n\|^{p-1} \biggl(\int_{B_{\varepsilon}(x_j)}|u_n|^p|\nabla \psi_\varepsilon|^pdx\biggr)^{1/p}.
\end{aligned}
\end{equation}

Furthermore, by \eqref{compem} and (I), we have
$|u_n|\to |u|$ in
$L^{p}_\mathrm{loc}(\R^N,\R)$, yielding
$|u_n(x)|\to |u(x)|$ a.e.\ in $\omega=\overline{B}_{\varepsilon}(x_{j})$, up to subsequences,  and there exists $g\in L^p(\omega, \mathbb R)$
such that $|u_{n}(x)|\le g(x)$ a.e.\ in $\omega$.  Thus, $|u_n(x) \nabla\psi_{\varepsilon}(x)|\le C g(x)$
a.e.\ in $\omega$, as well as in $\mathbb R^N$, and in turn, Lebesgue dominated convergence theorem gives
\begin{equation}\label{conv_grad}
|u_n\nabla\psi_{\varepsilon}|\to|u\nabla\psi_{\varepsilon}| \ \textnormal{in} \ L^{p}(\mathbb{R}^N).
\end{equation}
Consequently, passing to the limit  for $n\to\infty$ in \eqref{6.4}, using \eqref{conv_grad}, the boundedness of $(|u_n|)_n\in W^{1,p}_{V}(\mathbb R^N, \mathbb R)$ by Lemma \ref{lembound} and \eqref{kato},
 H\"older's inequality with exponents
$N/(N-p)$ and $N/p$, we obtain
$$\begin{aligned}
&\limsup_{n\to\infty}\biggl|\int_{\mathbb R^N}\overline{u_n}|\nabla_A u_n|^{p-2}\nabla_A u_n\cdot\nabla \psi_\varepsilon dx\biggr|\\
&\le C \left(\int_{B_{\varepsilon}(x_{j})}|\nabla\psi_{\varepsilon}|^{N}dx\right)^{1/N}
\left(\int_{B_{\varepsilon}(x_{j})}|u|^{p^*}dx\right)^{1/p^*}\le C\left(\int_{B_{\varepsilon}(x_{j})}|u|^{p^{*}}dx
\right)^{1/p^{*}},
\end{aligned}$$
where in the last inequality we have used that $|\nabla \psi_\varepsilon|\le C \varepsilon^{-1}$ and $|B(x_i,2\varepsilon)| \le C'\varepsilon^N$.
In turn, letting $\varepsilon\to 0$ from $|u|\in L^{p^*}(\R^N,\R)$, we get
\begin{equation}\label{op0}
\lim_{\varepsilon\to 0}\limsup_{n\to\infty}\biggl|\int_{\mathbb R^N}\overline{u_n}|\nabla_A u_n|^{p-2}\nabla_A u_n\cdot\nabla \psi_\varepsilon dx\biggr|=0.
\end{equation}

Now, by $(f_1)$ we get $f(x,|u_n|^p)|u_n|^p\le h_1(x)|u_n|^p+h_2(x)|u_n|^k$ with $p<k<p^*$, 
in turn, having $\psi_\varepsilon$ compact support then \eqref{compem} is in force and  we can argue as above yielding the existence of $g_1^p\in L^{p^*/p}$  and $g_2^k\in L^{p^*/k}$ so that $f(x,|u_n|^p)|u_n|^p\le G\in L^1_{loc}(\mathbb R^N)$ and Lebesgue dominated convergence gives
\begin{equation}\label{pl}
\lim_{\varepsilon\to 0}\lim_{n\to\infty}\int_{\mathbb R^N}f(x,|u_n|^p)|u_n|^p\psi_\varepsilon  dx=\lim_{\varepsilon\to 0}\int_{\mathbb R^N}f(x,|u|^p)|u|^p\psi_\varepsilon  dx=0.
\end{equation}
Then, by \eqref{pl} and \eqref{op0}, from \eqref{nujbec} we can conclude for $n$ large
\begin{equation}\label{nabla_le_K_ustar}\int_{\mathbb R^N} |\nabla|u_n||^p
\psi_\varepsilon dx\le\int_{\mathbb R^N}K(x)|u_n|^{p^*}\psi_\varepsilon dx+o(1).\end{equation}
Hence, since $K\in C(\mathbb R^N)$ and $\mbox{supp} \psi_\varepsilon=B_\varepsilon(x_j)$, if $\varepsilon\to0$ we deduce
\begin{equation}\label{6.23}
\mu_{j}\le K(x_{j})\nu_{j}.
\end{equation}
Consequently, either  $\nu_j=0$ and then also $\mu_j=0$,  or $\nu_j>0$. We will show that  
the latter case cannot occur for each $j\in J$, with $J$ given in Lemma \ref{lem4.1}. First observe that  combining \eqref{6.23} and \eqref{6.22}, we have
\begin{equation}\label{sxj}
S\leq K(x_j)\nu_j^{p/N}.
\end{equation}
The inequality above establishes that the concentration of the measure $\nu$ can occur only at points $x_j$ where $K(x_j)>0$.
Consequently, from \eqref{6.22} and \eqref{6.23} the measure $\mu$ can concentrate at points
in which the measure $\nu$ can.
Hence, the set $X_J=\{x_j: j\in J\}$  does not contain those points $x_{j}$ which are zeros for $K$.

Let $J_1=\{j\in J: K(x_j)>0\}$, we claim that $J_1=\emptyset$. To prove the claim, we proceed by contradiction, noting that  for any $j\in J_1$, then \eqref{sxj} implies
\begin{equation}\label{J2}
 \nu_{j}\ge \left( \frac{S}{K(x_{j})}\right)^{N/p}\ge \left( \frac{S}{\|K\|_\infty}\right)^{N/p},
\end{equation}
 which in particular gives that
 $|J_1|<\infty$ being $\nu$ a bounded measure; indeed, from \eqref{unpstarinf} and \eqref{J2},
 we get 
$$\infty>\int_{\mathbb{R}^N}d\nu=\|u\|_{p^*}^{p^*}+\int_{\mathbb R^N}\sum_{j\in J_1}\nu_j\delta_{x_j}dx+\nu_\infty
\ge \|u\|_{p^*}^{p^*}+\left(\frac{S}{\|K\|_\infty}\right)^{N/p}|J_1|+\nu_\infty.$$
 Now, we show that \eqref{J2} cannot occur. 
Using $\psi_\varepsilon\le 1$ and $(f_2)$, being $\theta>p$, we get
\begin{equation}\label{epn}\begin{aligned}
c+o(1)&= J_A(u_n)-\frac{1}{p}\langle J_A'(u_n),u_n\rangle= \frac1N\int_{\mathbb R^N}K(x)|u_n|^{p^*}dx\\
&\qquad+\frac1p \int_{\mathbb R^N}\left[ f(x,|u_n|^p)|u_n|^{p}-F(x,|u_n|^p)\right] dx
\\&\ge \frac1N\int_{\mathbb R^N}K(x)|u_n|^{p^*}\psi_\varepsilon dx.
\end{aligned}\end{equation}

 Using \eqref{J2} and letting $n\to\infty$ and $\varepsilon\to0$, we obtain
$$c\ge \frac 1N \nu_j K(x_j)\ge \frac{S^{N/p}}{N\|K\|_\infty^{N/p^*}}(=c_P)$$
which contradicts the assumption $c<c^*$, thus $J_1 = \emptyset$. It remains to show that the concentration
of $\nu$ cannot occur at infinity, namely $\nu_\infty=0$. 

Following the same idea used to prove \eqref{6.23} but with the following cutoff function   $\psi_{R}\in C^{\infty}(\mathbb{R}^N)$
such that $0\le\psi_{R}\le1$ in $\mathbb{R}^{N}$, $\psi_R(x)=0$ for $|x|<R$ and $\psi_{R}(x)=1$ for $|x|>2R$, so that \eqref{nabla_le_K_ustar} holds with $\psi_\varepsilon$ replaced by $\psi_R$.
Noting that
$$\lim_{R\to\infty}\limsup_{n\to\infty}\left\{\int_{\mathbb{R}^N}K|u_{n}|^{p^{*}}\psi_{R}dx\right\}
\le\|K\|_{\infty}\nu_{\infty},$$
we finally get
$\|K\|_\infty\nu_\infty\ge\mu_\infty$,
which, together with \eqref{6.22} gives $\nu_{\infty}\ge  S^{N/p}\|K\|_\infty^{-N/p}$.
As for \eqref{epn}, we get again a contradiction. 
Consequently,
$$\lim_{n\to\infty}\int_{\mathbb{R}^N}|u_{n}|^{p^{*}}dx=\int_{\mathbb{R}^N}|u|^{p^{*}}dx,$$
that is $\|u_n\|_{p^*}\to\|u\|_{p^*}$ as $n\to\infty$, which combined with $|u_n(x)|\to |u(x)|$ a.e. in $\R^N$, the latter obtained by an exhaustion process applied to a.e. convergence on compact sets in $\mathbb R^N$ by \eqref{compem},  implies $\|u_n-u\|_{p^*}\to 0$ by Brezis Lieb Lemma in \cite{bl}. It remains to prove 
\begin{equation}\label{claimfin}
\|u_n-u\|\to0, \quad \text{as}\,\,\,n\to\infty,
\end{equation}
 which is equivalent to prove
\begin{equation}\label{claimfin_equiv}
\int_{\mathbb R^N}|\nabla_A (u_n-u)|^pdx, \,\, \int_{\mathbb R^N}V(x)|u_n-u|^p\to 0 
\quad\mbox{as}\quad n\to\infty.\end{equation}
To this aim, since $(u_n)_n$ is a $(PS)_c$ sequence, we have
\begin{equation}\label{g-11}\begin{aligned}
o(1)&=\langle J'_A(u_n)-J'_A(u),u_n-u\rangle\\
&=\Re \!\int_{\mathbb R^N} \bigl\langle|\nabla_A u_n|^{p-2}\nabla_A u_n-|\nabla_A u|^{p-2}\nabla_A u, \nabla_A (u_n-u)\bigr\rangle_{\mathbb C^N}dx
\\ &\quad+\Re\!\int_{\mathbb R^N} V(x)\langle|u_n|^{p-2}u_n-|u|^{p-2}u,u_n-u\rangle_{\mathbb C^N}dx\\&\quad-\Re\int_{\mathbb{R}^N}K(x)\langle |u_n|^{p^*-2}u_n-|u|^{p^*-2}u, u_n-u\rangle_{\mathbb C^N}dx\\ &\quad
-\Re\!\int_{\mathbb{R}^N}\langle f(x, |u_n|^p)|u_n|^{p-2}u_n-f(x, |u|^p)|u|^{p-2}u, u_n-u\rangle_{\mathbb C^N}dx.
\end{aligned}\end{equation}
By Lemma \ref{lembound}, using H\"older's and Schwarz's inequality, the function $g_1$ above \eqref{pl} and the convergence of $(u_n)_n$ in $L^{p^*}(\R^N)$ we get
 
$$\begin{aligned}
\lefteqn{\left|\int_{\mathbb{R}^N}K(x)\langle |u_n|^{p^*-2}u_n-|u|^{p^*-2}u, u_n-u\rangle_{\mathbb C^N}dx\right|}
\\
&\le \|K\|_\infty \int_{\mathbb{R}^N} \left(|u_n|^{p^*-1}+|u|^{p^*-1}\right)|u_n-u|dx\\
& \le \|K\|_\infty \left(\|u_n\|_{p^*}^{p^*-1}+\|u\|_{p^*}^{p^*-1}\right)\|u_n-u\|_{p^*}=o(1).
\end{aligned}$$
 
Similarly, by using $(f_1)$, we have
$$\begin{aligned}
&\left|\int_{\mathbb{R}^N}\langle f(x, |u_n|^p)|u_n|^{p-2}u_n-f(x, |u|^p)|u|^{p-2}u, u_n-u\rangle_{\mathbb C^N}\right|\\
&\quad \le\int_{\mathbb{R}^N} \left[h_1(x)|u_n|^{p-1}+h_2(x)|u_n|^{k-1}\right]|u_n-u|dx\\
& \hspace{2cm}+
 \int_{\mathbb{R}^N} \left[h_1(x)|u|^{p-1}+h_2(x)|u|^{k-1}\right]|u_n-u|dx\\
 & \quad \le\left(\|h_1\|_{N/p}\left[\|u_n\|_{p^*}^{p-1}+\|u\|_{p^*}^{p-1}\right ]+\|h_2\|_{p^*/(p^*-k)}\left[\|u_n\|_{p^*}^{k-1}+\|u\|_{p^*}^{k-1}\right ]\right)\|u_n-u\|_{p^*}=o(1).
\end{aligned}$$
Thus, \eqref{g-11} reduces to 
\begin{equation}\label{crucial}\begin{aligned}
o(1)&=\langle J'_A(u_n)-J'_A(u),u_n-u\rangle\\
&=\Re \!\int_{\mathbb R^N} \bigl\langle|\nabla_A u_n|^{p-2}\nabla_A u_n-|\nabla_A u|^{p-2}\nabla_A u, \nabla_A (u_n-u)\bigr\rangle_{\mathbb C^N}dx
\\ &\quad+\Re\!\int_{\mathbb R^N} V(x)\langle|u_n|^{p-2}u_n-|u|^{p-2}u,u_n-u\rangle_{\mathbb C^N}dx,
\end{aligned}\end{equation}
 where we recall that $\Re\langle|b|^{p-2}b-|a|^{p-2}a,
b-a\rangle_{\CC^N}\ge0$ for all $a,b\in\CC^N$ and $p>1$ (cfr. Appendix A), so that
\eqref{crucial} gives
\begin{equation}\label{final_o(1)}
    \begin{aligned}
&\Re \!\int_{\mathbb R^N} \bigl\langle|\nabla_A u_n|^{p-2}\nabla_A u_n-|\nabla_A u|^{p-2}\nabla_A u, \nabla_A (u_n-u)\bigr\rangle_{\mathbb C^N}dx=o(1)\quad\mbox{as } n\to\infty,
\\ &\Re\!\int_{\mathbb R^N} V(x)\langle|u_n|^{p-2}u_n-|u|^{p-2}u,u_n-u\rangle_{\mathbb C^N}dx=o(1) \quad\mbox{as } n\to\infty.
\end{aligned}
\end{equation}
 In what follows, we make use of the following complex version of Simon's inequality, see \cite{simon}, valid for all
$a,b\in\mathbb C^N$
\begin{equation}\label{diaz_complex}
|a-b|^{p}\leq c\begin{cases} \, \Re\langle |a|^{p-2}a- |b|^{p-2}b
,a-b\rangle_{\mathbb C^N}\quad&\mbox{for }\phantom{1<\,}p\geq 2;\\
 \left(\Re\langle |a|^{p-2}a- |b|^{p-2}b
,a-b\rangle_{\mathbb C^N}\right)^{p/2}\left(|a|^p+|b|^p\right)^{(2-p)/2} 
\quad&\mbox{for }1<p<2,\end{cases}\end{equation}
 for details, we refer to Appendix \ref{ineq}.

In particular, if $1<p<2$, then, by \eqref{diaz_complex} applied with $a=\nabla_A u_n$ and $b=\nabla_A u$ and by Holder's inequality with exponents $2/p$
and $2/(2-p)$,  we arrive to 
$$\begin{aligned}\int_{\mathbb R^N}&|\nabla_A(u_n-u)|^pdx
\\&\quad\le c\, \!\left(\int_{\mathbb R^N} \! \Re\bigl\langle|\nabla_A u_n|^{p-2}\nabla_A u_n-|\nabla_A u|^{p-2}\nabla_A u, \nabla_A (u_n-u)\bigr\rangle_{\mathbb C^N}dx\right)^{p/2}\\&\hspace{2cm}\cdot
\bigl(\|u\|^p+\|u_n\|^p\bigr)^{\frac{2-p}2}dx
\\&\quad  \le c\, \biggl(\!\int_{\mathbb R^N} \!  \Re \bigl\langle|\nabla_A u_n|^{p-2}\nabla_A u_n-|\nabla_A u|^{p-2}\nabla_A u, \nabla_A (u_n-u)\bigr\rangle_{\mathbb C^N}
dx\biggr)^{p/2}\\&\hspace{2cm}\cdot \left(\|u\|^{p(2-p)/2}+\|u_n\|^{p(2-p)/2}\right),
\end{aligned}$$
 where we have used the standard inequality with $\alpha,\beta>0$
\begin{equation}\label{standard}
(\alpha+\beta)^\gamma\le \overline C(\alpha^\gamma+\beta^\gamma), \qquad \text{with}\quad \overline C=\max\{2^{\gamma-1},1\}.
\end{equation}
By the boundedness of $(u_n)_n$ as $(PS)$ sequence  by Lemma \ref{lembound}, in turn 
$$ \int_{\mathbb R^N}|\nabla_A(u_n-u)|^pdx \le C \biggl(\!\int_{\mathbb R^N} \Re\bigl\langle|\nabla_A u_n|^{p-2}\nabla_A u_n-|\nabla_A u|^{p-2}\nabla_A u, \nabla_A (u_n-u)\bigr\rangle^{p}_{\mathbb C^N}
dx\biggr)^{p/2},
$$ where $C$ does not depend on $n$.
On the other hand, in the same way as above, applying \eqref{diaz_complex}  with $a=u_n$ and $b=u$, for $1<p<2$ it holds
$$\int_{\mathbb R^N} V(x)|u_n-u|^pdx\le  c \biggl(\Re\!\int_{\mathbb R^N} V(x)\langle|u_n|^{p-2}u_n-|u|^{p-2}u,u_n-u\rangle_{\mathbb C^N}dx\biggr)^{p/2}.$$
In turn, \eqref{final_o(1)} immediately gives \eqref{claimfin_equiv} when $1<p<2$.
While, if $p\ge 2$, by \eqref{diaz_complex} we immediately have
$$|\nabla_A(u_n-u)|^pdx\le c \!\int_{\mathbb R^N}   \Re\bigl\langle|\nabla_A u_n|^{p-2}\nabla_A u_n-|\nabla_A u|^{p-2}\nabla_A u, \nabla_A (u_n-u)\bigr\rangle^{p}_{\mathbb C^N} dx$$
and 
$$\int_{\mathbb R^N} V(x)|u_n-u|^pdx\le c 
\Re\!\int_{\mathbb R^N} V(x)\langle|u_n|^{p-2}u_n-|u|^{p-2}u,u_n-u\rangle_{\mathbb C^N}dx,$$
so that \eqref{claimfin_equiv} with $p\ge2$ follows by \eqref{final_o(1)}. In conclusion,  \eqref{claimfin} is in force concluding the proof of the lemma.
\end{proof}

From now on we denote,
\begin{equation}\label{clambda}
c_A = \inf_{u\in X\setminus\{0\}} \max_{t\ge 0} J_A(tu).
\end{equation}
\begin{remark}\label{culambda}
Obviously, $c_A\geq c_M$, where $c_M$ is defined in \eqref{cu},
since $J_A(tu)<0$ for $u\in X\setminus\{0\}$ and $t$ large by the structure of $J_A$.
\end{remark}
\begin{lemma}\label{c<csegnato}
Let $c_P$ and $c_A$ be defined as in \eqref{csegnato} and \eqref{clambda}, respectively. Then, 
$$0< c_A<c_P.$$
\end{lemma}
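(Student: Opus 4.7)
The plan is to split the statement into its two halves and handle each by a different method. The positivity $c_A>0$ is immediate from the mountain pass geometry: given any $u\in X\setminus\{0\}$, set $t_\star=R/\|u\|$ with $R,\alpha$ as in Lemma~\ref{mpt_geometry}(ii); then $\|t_\star u\|=R$, so $J_A(t_\star u)\ge\alpha>0$, hence $\max_{t\ge 0}J_A(tu)\ge\alpha$, and taking the infimum yields $c_A\ge\alpha>0$.

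For the strict upper bound $c_A<c_P$, I would exhibit a single test function realizing a value below $c_P$. The natural candidate is the gauged concentrating profile
$$
v_\varepsilon(x)=e^{-iA(0)\cdot x}\,w_\varepsilon(x),
$$
with $w_\varepsilon$ the normalized cut-off Talenti instanton from \eqref{defwtil} centred at the origin (the point where $K$ attains its maximum by $(K)$). The gauge factor produces the pointwise identity
$$
|\nabla_A v_\varepsilon|^{2}=|\nabla w_\varepsilon|^{2}+|A(x)-A(0)|^{2}\,w_\varepsilon^{2},
$$
so that continuity of $A$ makes the magnetic correction vanish on the support of $w_\varepsilon$ as $\varepsilon\to 0$. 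Using Lemma~\ref{tallemma} together with a binomial-type estimate for the integrand $(|\nabla w_\varepsilon|^{2}+|A(x)-A(0)|^{2}w_\varepsilon^{2})^{p/2}$ on the compact support, and expanding $K(x)=K(0)+O(|x|^{\tau})$ per $(K)$, I obtain expansions
$$
\mathsf A_\varepsilon:=\|v_\varepsilon\|^{p}=S+\eta_\varepsilon^{A},\qquad \mathsf B_\varepsilon:=\int_{\R^{N}}K(x)|v_\varepsilon|^{p^{\ast}}dx=K(0)+\eta_\varepsilon^{B},
$$
where $\eta_\varepsilon^{A}\ge 0$ collects the Talenti remainder, the $V$-term, and the magnetic contribution (all controlled by $\int w_\varepsilon^{p}$ plus the $O(\varepsilon^{(N-p)/(p-1)})$ from Lemma~\ref{tallemma}), and $\eta_\varepsilon^{B}\le 0$ by the flatness hypothesis $\|K\|_\infty=K(0)$.

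To maximize $t\mapsto J_A(tv_\varepsilon)$, I use $(f_3)$ to bound
$$
J_A(tv_\varepsilon)\le h_\varepsilon(t):=\tfrac{t^{p}}{p}\mathsf A_\varepsilon-\tfrac{t^{p^{\ast}}}{p^{\ast}}\mathsf B_\varepsilon-\tfrac{\lambda}{p}\,t^{q}\!\int_{\R^{N}}w_\varepsilon^{q}\,dx.
$$
Since $F\ge 0$, the maximizer $t_\varepsilon$ of $J_A(t v_\varepsilon)$ satisfies $t_\varepsilon\le (\mathsf A_\varepsilon/\mathsf B_\varepsilon)^{1/(p^{\ast}-p)}$; a lower bound $t_\varepsilon\ge t_{\min}>0$ independent of $\varepsilon$ follows by plugging into the Euler equation the upper bound from $(f_1)$ together with $\mathsf A_\varepsilon\to S>0$. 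Hence
$$
\max_{t\ge 0}J_A(tv_\varepsilon)\le \frac{\mathsf A_\varepsilon^{N/p}}{N\,\mathsf B_\varepsilon^{(N-p)/p}}-\tfrac{\lambda}{p}\,t_{\min}^{q}\!\int_{\R^{N}}w_\varepsilon^{q}\,dx= c_P+\Xi_\varepsilon-\Lambda_\varepsilon,
$$
where a first-order expansion gives $\Xi_\varepsilon\ \lesssim\ \eta_\varepsilon^{A}+|\eta_\varepsilon^{B}|$ and $\Lambda_\varepsilon\gtrsim \int w_\varepsilon^{q}dx$.

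The core of the argument, and the main obstacle, is to verify that $\Xi_\varepsilon<\Lambda_\varepsilon$ for sufficiently small $\varepsilon$. This is a case analysis dictated by Lemma~\ref{tallemma}: one must simultaneously track (i) the $\varepsilon$-orders of $\int w_\varepsilon^{p}$ (which feed into $\eta_\varepsilon^{A}$ via the magnetic and $V$ terms), branching into the three regimes $N>p^{2}$, $N=p^{2}$, $N<p^{2}$; (ii) the $\varepsilon^{\tau}$-type order of $|\eta_\varepsilon^{B}|$, whose smallness is dictated by the two-sided hypothesis on $\tau$ in $(K)$; and (iii) the order of $\int w_\varepsilon^{q}$, which depends on whether $q\lessgtr N(p-1)/(N-p)$. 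The three alternatives in $(f_3)$ are calibrated exactly so that either $\lambda>0$ suffices (when $N\ge p^{2}$, $1<p\le 2$) or $\lambda$ can be chosen large enough to dominate every remainder in the other regimes; once the dominance is checked in each case, one concludes $c_A\le \max_t J_A(tv_\varepsilon)<c_P$.
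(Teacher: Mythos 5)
Your proposal is correct and follows essentially the same route as the paper: the gauged test function $v_\varepsilon = e^{-iA(0)\cdot x}w_\varepsilon$ is exactly the paper's $u_\varepsilon = e^{i\theta}w_\varepsilon$ with $\theta(x)=-A(0)\cdot x$, the norm expansions and the case analysis ($N\gtrless p^2$, $q\gtrless N(p-1)/(N-p)$, role of $\lambda$) coincide with Lemmas~\ref{tallemma}, \eqref{normapeps2}, \eqref{est_norm_pge2} and the subsequent discussion, and your explicit positivity argument $c_A\ge\alpha>0$ just fleshes out what the paper leaves to Remark~\ref{culambda}. The only cosmetic divergence is that you keep $\mathsf B_\varepsilon=\int K|v_\varepsilon|^{p^*}$ in the denominator of the algebraic maximum and expand afterwards, whereas the paper freezes $D_2=K(0)$ in \eqref{d1d2} and relegates the $K(0)-K(x)$ deviation to $F(\varepsilon)$; both yield the same $O(\varepsilon^\tau)$ correction (and your claim $\eta_\varepsilon^A\ge 0$ is unnecessary and not quite justified, but harmless since only the magnitude matters).
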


\begin{proof}
It is enough to prove that there exists $v\in X\setminus\{0\}$ 
such that ${\max_{t\ge 0} J_A(tv)<c_P}$. In turn, passing to the infimum we get the assertion of the lemma.

Let $\theta$ be a linear real function  defined in $\mathbb R^N$ by $\theta(x)=- A(0) \cdot x=-\sum_{k=1}^NA_k(0)x_k$. Note that there exists $\delta_A>0$ sufficiently small such that 
\begin{equation}\label{def_small_c}|(A +\nabla\theta)(x)|^2<c \quad\text{for all }|x| <\delta_A\end{equation}
being $(A +\nabla\theta)(0) =0$ and by the continuity of $A$, thanks to  $(A)$. 

Now consider $\psi\in C^1$ defined in \eqref{defcut}, take a smaller $\delta_\psi$ if needed such that $\delta_\psi<\min\{\delta_A, \delta_K\}$, where $\delta_K$ is defined in $(K)$, so that $\delta=\delta_\psi\in(0,\frac{\delta_A}{\varepsilon})$ 
and let $u_\varepsilon(x)=e^{i\theta(x)} w_\varepsilon(x)$, where $w_\varepsilon$ is defined in \eqref{defwtil} with $0<\varepsilon<1$. In particular $|u_\varepsilon|=|w_\varepsilon|$. 
By using \eqref{standard},
we compute
\begin{equation}\label{normapeps}\begin{aligned}
\| u_\varepsilon\|^p&=\int_{\mathbb R^N} [|\nabla_A u_\varepsilon|^p+V(x)|u_\varepsilon|^p] dx\\
&= \int_{\mathbb R^N}[|\nabla w_\varepsilon+i\nabla\theta(x) w_\varepsilon+iA(x)w_\varepsilon|^{p}+V(x)|w_\varepsilon|^p] dx\\
&=\int_{\mathbb R^N}[\left(|\nabla w_\varepsilon|^2+|\nabla\theta(x)+A(x)|^2|w_\varepsilon|^2\right)^{p/2}+V(x)|w_\varepsilon|^p] dx\\
&\le \overline C\left[\|\nabla w_\varepsilon\|_p^p+(c+\|V\|_{L^\infty(B_\delta)})\|w_\varepsilon\|_p^p\right].
\end{aligned}\end{equation}
Thus \eqref{normapeps} and Remark \ref{weW1p} imply $u_\varepsilon\in X$.

Arguing as in \eqref{iii}, by $(K)$, $(f_3)$, \eqref{normapeps}, we have
$$\begin{aligned}
J_A(tu_\varepsilon)
\le \frac{t^p}{p}\|u_\varepsilon\|^p-\frac{\lambda }{p}t^q\|u_\varepsilon\|_q^q\to-\infty,
\end{aligned}$$
as $t\to\infty$, being $q>p$.

So that by Lemma \ref{mpt_geometry}, 
there exists $t_\varepsilon>0$ such that $J_A(t_\varepsilon u_\varepsilon)={\max_{t\ge 0} J_A(tu_\varepsilon)}$. Thus, by the regularity of $J_A$, we have $\langle J'_A(t_\varepsilon u_\varepsilon), u_\varepsilon\rangle=0$, or equivalently
\begin{equation}\label{E'c}
\| u_\varepsilon\|^p
=t_\varepsilon^{p^*-p} \int_{\mathbb R^N}K(x)| u_\varepsilon|^{p^*} dx+\int_{\mathbb R^N} f(x,|t_\varepsilon u_\varepsilon|^p)| u_\varepsilon|^p dx.
\end{equation}
Let us show that there exist $B_1, B_2>0$ such that
\begin{equation}\label{3.6}
B_1 \le t_\varepsilon \le B_2 \quad \text{for}\quad \varepsilon > 0 \quad \text{sufficiently small.}
\end{equation}
From \eqref{E'c}, we have  by $(f_2)$
\begin{equation}\label{tinf}
\| u_\varepsilon\|^p \ge t_\varepsilon^{p^*-p} \int_{\mathbb R^N}K(x)| u_\varepsilon|^{p^*} dx.
\end{equation}
If $t_\varepsilon\to \infty$ as $\varepsilon\to 0$, we immediately reach a contradiction since the right-hand side of \eqref{tinf} is unbounded being, by $(K)$ 
$$0<\!\int_{\mathbb R^N}K(x)|u_\varepsilon|^{p^*} dx\le \|K\|_\infty \|w_\varepsilon\|_{p^*}^{p^*}=\|K\|_\infty$$
by the definition of $w_\varepsilon$,
while the left-hand side is bounded by \eqref{normapeps} and Lemma \ref{tallemma}. In turn, $t_\varepsilon$ has to be bounded from above.

Now, we assume $t_\varepsilon\to 0$ for $\varepsilon\to 0$, up to subsequence. 
By \eqref{f0f1}, for any $\xi>0$ there exists $C_\xi>0$ such that
 $$\begin{aligned}
\int_{\mathbb R^N}& f(x,|t_\varepsilon u_\varepsilon|^p)| u_\varepsilon|^p dx\le \int_{\mathbb R^N} \biggl(\xi | u_\varepsilon|^p +C_\xi\left[h_3(x)|t_\varepsilon u_\varepsilon|^{k-p}\right]| u_\varepsilon|^p \biggr)dx
\\&\le \xi \|u_\varepsilon\|_p^p+ C_\xi t_\varepsilon ^{k-p}\|h_3\|_{p^*/(p^*-k)}\|u_\varepsilon\|_{p^*}^k 
\le \frac{\xi}{V_0} \|u_\varepsilon\|^p+ C_\xi t_\varepsilon ^{k-p}\|h_3\|_{p^*/(p^*-k)}\|u_\varepsilon\|_{p^*}^k, 
 \end{aligned}$$
 where in the last inequality we have used $(V)$. Now choosing $\xi=V_0/2$, thanks to
\eqref{E'c}, we have
$$\begin{aligned}
\frac 12\| u_\varepsilon\|^p
&\le t_\varepsilon^{p^*-p}\|K\|_\infty  \|u_\varepsilon\|_{p^*}^{p^*} + C_\xi t_\varepsilon ^{k-p}\|h_3\|_{p^*/(p^*-k)}\|u_\varepsilon\|_{p^*}^k
\end{aligned}$$
which leads to an absurd, taking $t_\varepsilon\to 0$ for $\varepsilon\to 0$, since $p<k$ and the left-hand side is far from $0$. Therefore, \eqref{3.6} holds true.
In order to reach our claim, we need to prove that
\begin{equation}\label{claimfinale}
J_A(t_\varepsilon u_\varepsilon)\le \frac{S^{N/p}}{N\|K\|_\infty^{N/p^*}}(=c_P),
\end{equation}
to this aim we have to divide the proof into two cases according to the range of $p$.

If $1<p<2$, from \eqref{normapeps}, with $\overline C=1$, and Lemma 1 we get 
$$\begin{aligned}
\|u_\varepsilon\|^p 
\le \|\nabla w_\varepsilon\|_p^p&+(c+\|V\|_{L^\infty(B_\delta)})\|w_\varepsilon\|_p^p\\
&=S+O(\varepsilon^{\frac{N-p}{p-1}})+\begin{cases}O(\varepsilon^p) &\text{if}\,\,N>p^2,\\
O(\varepsilon^{\frac{N-p}{p-1}}|\log\varepsilon|)&\text{if}\,\,N=p^2,\\
O(\varepsilon^{\frac{N-p}{p-1}}) &\text{if}\,\,N<p^2.
\end{cases}
\end{aligned}$$
Consequently, for $1<p<2$, it holds 
\begin{equation}\label{normapeps2}
\|u_\varepsilon\|^p=S+\mathcal O_{\varepsilon, 1}, \qquad \text{where}\quad \mathcal O_{\varepsilon, 1}=\begin{cases}O(\varepsilon^p) &\text{if}\,\,N>p^2,\\
O(\varepsilon^{p}|\log\varepsilon|)&\text{if}\,\,N=p^2,\\
O(\varepsilon^{\frac{N-p}{p-1}}) &\text{if}\,\,N<p^2.
\end{cases}
\end{equation}
Taking inspiration from \cite{dh}, by $(f_3)$, we have $$\begin{aligned}
J_A(t_\varepsilon u_\varepsilon)&\le\frac{t_\varepsilon^p}{p}\|u_\varepsilon\|^p-\frac{\lambda}{p}t_\varepsilon^q\|u_\varepsilon\|_q^q-\frac{t_\varepsilon^{p^*}}{p^*}\int_{\mathbb R^N}K(x)|u_\varepsilon|^{p^*}dx=E(\varepsilon)+F(\varepsilon)
\end{aligned}$$
where
\begin{equation}\label{defE}
E(\varepsilon)=\frac{t_\varepsilon^p}{p}\|u_\varepsilon\|^p-\frac{K(0)t_\varepsilon^{p^*}}{p^*},
\end{equation}
\begin{equation}\label{defF}
F(\varepsilon)=-\frac{\lambda}{p}t_\varepsilon^q\|u_\varepsilon\|_q^q+\frac{t_\varepsilon^{p^*}}{p^*}\int_{\mathbb R^N}(K(0)-K(x))|u_\varepsilon|^{p^*}dx.
\end{equation}
Before going ahead note that, for any $D_1, D_2>0$, one has
\begin{equation}\label{d1d2}
\frac{t^p}{p}D_1-\frac{t^{p^*}}{p^*}D_2\le \frac{1}{N} \left(\frac{D_1}{D_2^{(N-p)/N}}\right)^{N/p} \quad \text{for all} \,\,t\ge 0.
\end{equation}
Then \eqref{d1d2}, \eqref{normapeps2} and $K(0)=\|K\|_\infty$ by $(K)$, all applied to \eqref{defE}, give 
\begin{equation}\label{EE}
E(\varepsilon)\le\frac{1}{N}\left(\frac{\|u_\varepsilon\|^p}{K(0)^{(N-p)/N}}\right)^{N/p}\le c_P+\mathcal O_{\varepsilon, 1},
\end{equation}
where we have used
\begin{equation}\label{Oe1}\begin{aligned}
(S+\mathcal O_{\varepsilon, 1})^{N/p}&\le S^{N/p}+\frac Np(S+\mathcal O_{\varepsilon, 1})^{(N-p)/p}\mathcal O_{\varepsilon, 1}
=S^{N/p}+ \mathcal O_{\varepsilon, 1},
\end{aligned}\end{equation}
 since $N>p$, in light of the following inequality
\begin{equation}\label{inr}
(a+b)^r\le a^r+r(a+b)^{r-1}b,\qquad \text{for all}\quad a,b>0,\,\, r\ge 1.
\end{equation}
On the other hand, \eqref{3.6} together with the definition of $u_\varepsilon$, turns \eqref{defF} into
$$\begin{aligned}
F(\varepsilon)&\le -\frac{\lambda }{p}B_1^q\|w_\varepsilon\|_q^q
+\frac{B_2^{p^*}c_{p,N}^{p^*}}{p^*\|\psi U_\varepsilon\|_{p^*}^{p^*}}\int_{\R^N}(K(0)-K(x))\psi(x)^{p^*}\left[\frac{\varepsilon^{\frac{1}{p-1}}}{\varepsilon^{\frac{p}{p-1}}+|x|^{\frac{p}{p-1}}}\right]^N.
\end{aligned}$$
By using (K) since $\delta<\delta_K$, the change of variables $x=\varepsilon y$, and passing to radial coordinates, we get
$$\begin{aligned}
\int_{\R^N}&(K(0)-K(x))\psi(x)^{p^*}\left[\frac{\varepsilon^{\frac{1}{p-1}}}{\varepsilon^{\frac{p}{p-1}}+|x|^{\frac{p}{p-1}}}\right]^N dx\sim \int_{B_\delta}|x|^\tau\psi(x)^{p^*}\frac{\varepsilon^{\frac{N}{p-1}}}{\bigl(\varepsilon^{\frac{p}{p-1}}+|x|^{\frac{p}{p-1}}\bigr)^N} dx\\
&\le 
\int_{\R^N}|\varepsilon y|^\tau \frac{\varepsilon^{\frac{N}{p-1}+N}}{\bigl(\varepsilon^{\frac{p}{p-1}}+|\varepsilon y|^{\frac{p}{p-1}}\bigr)^N}
dy\\
&=\varepsilon^\tau\int_{\R^N}\frac{|y|^\tau}{(1+|y|^{\frac{p}{p-1}})^N} dy=C\varepsilon^\tau\int_0^\infty\frac{r^{\tau+N-1}}{(1+r^{\frac{p}{p-1}})^N} dr=O(\varepsilon^\tau),
\end{aligned}$$
since $0<\tau<N/(p-1)$.
Moreover, since $U_\varepsilon\in L^{p^*}(\mathbb R^N)$ and  $\|\psi U_\varepsilon\|_{p^*}^{p^*}$
does not depend on $\varepsilon$ by \eqref{Uepsx0}, then it is bounded  away from $0$ yielding
\begin{equation}\label{FF}
F(\varepsilon)\le -\frac{\lambda }{p}B_1^q\|w_\varepsilon\|_q^q+O(\varepsilon^{\tau}).
\end{equation}
So that, by the choice of $\tau$ in (K), from \eqref{EE} and \eqref{FF}, we get for $1<p<2$
\begin{equation}\label{OOe1} 
J_A(t_\varepsilon u_\varepsilon)\le c_P+\mathcal O_{\varepsilon, 1}-\frac{\lambda }{p}B_1^q\|w_\varepsilon\|_q^q.
\end{equation}

If $p\ge2$ then \eqref{normapeps} holds $\overline C=2^{p/2-1}$ but to proceed we need no coefficient (different from $1$)  in the term $\|\nabla w_\varepsilon\|_p^p$. For this reason, we proceed by using \eqref{normapeps}  in \eqref{inr} with $r=p/2  (\ge1)$, we get
$$\begin{aligned}
\|u_\varepsilon\|^p&=\int_{\mathbb R^N}\left(|\nabla w_\varepsilon|^2+|\nabla\theta(x)+A(x)|^2|w_\varepsilon|^2\right)^{p/2}\\&\le\int_{\mathbb R^N}\left[|\nabla w_\varepsilon|^p+c\frac p2\left( |\nabla w_\varepsilon|^2+c|w_\varepsilon|^2\right)^{p/2-1}|w_\varepsilon|^2\right] dx\\&
\le\|\nabla w_\varepsilon\|_p^p
+c\frac p2 \overline{C}\int_{\mathbb R^N}[  |\nabla w_\varepsilon|^{p-2}|w_\varepsilon|^2+c|w_\varepsilon|^p] dx,
\end{aligned}$$
where $c$ is given in \eqref{def_small_c} and  in the last inequality we have used \eqref{standard} with $\gamma=p/2-1$. In turn,  by $(V)$ and Holder's inequality, we have 
$$\begin{aligned}
\|u_\varepsilon\|^p&\le 
\|\nabla w_\varepsilon\|_p^p
+c\frac p2 \overline{C}(\|\nabla w_\varepsilon\|_p^{p-2}\cdot\|w_\varepsilon\|_p^2+c\|w_\varepsilon\|_p^p)+\|V\|_{L^\infty(B_\delta)}\|w_\varepsilon\|_p^p\\
&\le \|\nabla w_\varepsilon\|_p^p+C(\|\nabla w_\varepsilon\|_p^{p-2}\cdot\|w_\varepsilon\|_p^2+\|w_\varepsilon\|_p^p)\\
&=\|\nabla w_\varepsilon\|_p^p+C\|w_\varepsilon\|_p^p+(S+O(\varepsilon^{\frac{N-p}{p-1}}))^{(p-2)/p}\cdot\begin{cases}O(\varepsilon^2) &\text{if}\,\,N>p^2,\\
O(\varepsilon^{\frac{2(N-p)}{p(p-1)}}|\log\varepsilon|^{2/p})&\text{if}\,\,N=p^2,\\
O(\varepsilon^{\frac{2(N-p)}{p(p-1)}}) &\text{if}\,\,N<p^2,
\end{cases}\\
&=\|\nabla w_\varepsilon\|_p^p+\begin{cases}O(\varepsilon^p)+O(\varepsilon^2)
&\text{if}\,\,N>p^2,\\
O(\varepsilon^{\frac{N-p}{p-1}}|\log\varepsilon|)+O(\varepsilon^{\frac{2(N-p)}{p(p-1)}}|\log\varepsilon|^{2/p})
&\text{if}\,\,N=p^2,\\
O(\varepsilon^{\frac{N-p}{p-1}}) +O(\varepsilon^{\frac{2(N-p)}{p(p-1)}})
&\text{if}\,\,N<p^2,
\end{cases}\\
&=S+O(\varepsilon^{\frac{N-p}{p-1}})+\begin{cases}O(\varepsilon^2) &\text{if}\,\,N>p^2,\\
O(\varepsilon^{\frac{2(N-p)}{p(p-1)}}|\log\varepsilon|^{2/p})&\text{if}\,\,N=p^2,\\
O(\varepsilon^{\frac{2(N-p)}{p(p-1)}}) &\text{if}\,\,N<p^2,
\end{cases}\end{aligned}$$
where the estimates above follows from Lemma \ref{tallemma}. In turn, for $p\ge2$, we arrive to
\begin{equation}\label{est_norm_pge2}\|u_\varepsilon\|^p=S+\mathcal O_{\varepsilon, 2},\qquad\quad \mathcal O_{\varepsilon, 2}=\begin{cases}O(\varepsilon^2) &\text{if}\,\,N>p^2,\\
O(\varepsilon^2|\log\varepsilon|^{2/p})&\text{if}\,\,N=p^2,\\
O(\varepsilon^{\frac{2(N-p)}{p(p-1)}}) &\text{if}\,\,N<p^2.
\end{cases}
\end{equation}
Similarly to \eqref{Oe1}, by using \eqref{inr}, we obtain
$(S+\mathcal O_{\varepsilon, 2})^{N/p}\le S^{N/p}+C \mathcal O_{\varepsilon, 2}$. 
Hence, analogously to the case $1<p<2$, by \eqref{d1d2}, \eqref{est_norm_pge2}, $(f_3)$ and \eqref{3.6} we get for $p\ge2$
\begin{equation}\label{OOe2} 
\begin{aligned}
J_A(t_\varepsilon u_\varepsilon)&\le c_P+\mathcal O_{\varepsilon, 2}-\frac{\lambda }{p}B_1^q\|w_\varepsilon\|_q^q. \\
\end{aligned}\end{equation}

In order to understand the behaviour of the last two terms in \eqref{OOe1} and \eqref{OOe2}, following the standard technique in \cite{ajmaa, dh, Peralcourse, SY}, we have to divide our analysis into different cases. 
 
{\bf Case $N>p^2$}. In this situation, then $q>p>N(p-1)/(N-p)$. Thus,  combining either \eqref{OOe1} or 
\eqref{OOe2} with \eqref{est_norm_q} in Lemma \ref{tallemma}, we infer 
$$J_A(t_\varepsilon u_\varepsilon)\le c_P+ O(\varepsilon^{\nu})-\lambda O(\varepsilon^{N-\frac{N-p}{p}q}),\qquad \nu=\min\{2,p\}.$$
If $1<p\le 2$,  since $N-\frac{N-p}{p}q<p$ by $q>p$, we have by \eqref{OOe1} and \eqref{est_norm_q}
$$J_A(t_\varepsilon u_\varepsilon)\le c_P-\lambda O(\varepsilon^{N-\frac{N-p}{p}q}).$$
While, if $p>2$, by \eqref{OOe2} and \eqref{est_norm_q}, we can choose $\lambda=\varepsilon^{-\sigma}$, $\sigma>0$, so that 
$$J_A(t_\varepsilon u_\varepsilon)\le c_P+O(\varepsilon^2)-O(\varepsilon^{N-\frac{N-p}{p}q-\sigma})=c_P-O(\varepsilon^{N-\frac{N-p}{p}q-\sigma})$$
by choosing $\sigma>0$ such that
$$N-2-\frac{N-p}{p}q<\sigma<N-\frac{N-p}{p}q$$
that is
$$0<N-\frac{N-p}{p}q-\sigma<2.$$
Thus, for any $1<p<N$, we have obtained \eqref{claimfinale}
for $\varepsilon>0$ sufficiently small and $\lambda$ large only for $p>2$.

\noindent
{\bf Case $N=p^2$}. By using \eqref{est_norm_q}, \eqref{OOe1} and
\eqref{OOe2}, since $N(p-1)/(N-p)=p<q$ we have, 
$$\begin{aligned}J_A(t_\varepsilon u_\varepsilon)&\le 
c_P+ O(\varepsilon^{\nu}|\log(\varepsilon)|^{\frac{\nu}{p}})-\lambda O(\varepsilon^{p^2-(p-1)q}).
\end{aligned}$$
If $1<p<2$, then $\nu=p$ and we have
$$\lim_{\varepsilon\to 0^+}\varepsilon^{(q-p)(p-1)}|\log(\varepsilon)|=0.$$
While, if $p>2$, then $\nu=2$ and we can choose $\lambda=\varepsilon^{-\sigma}$, $\sigma>0$,  such that
$$-2+p^2-(p-1)q<\sigma<p^2-(p-1)q,$$
thus  
$$\lim_{\varepsilon\to 0^+}\varepsilon^{2-p^2+(p-1)q+\sigma}|\log(\varepsilon)|^{\frac{2}{p}}=0.$$
Hence, \eqref{claimfinale} holds also if $N=p^2$.

\noindent
{\bf Case $N<p^2$}. In this direction, then  $p<\dfrac{N(p-1)}{N-p}$ so we divide the proof according to the value of $q$.

If $p<q<\dfrac{N(p-1)}{N-p}$, then we can choose $\lambda=\varepsilon^{-\sigma}$ so that by \eqref{est_norm_q}
$$\begin{aligned}J_A(t_\varepsilon u_\varepsilon)&\le c_P+ O(\varepsilon^{ \frac{\nu(N-p)}{p(p-1)}})- O(\varepsilon^{\frac{N-p}{p(p-1)}q-\sigma})
\end{aligned}$$
by choosing $\sigma$ as
$$0<\frac{N-p}{p-1}\left(\frac{q}{p}-\frac{\nu}{p}\right)<\sigma<\frac{N-p}{p-1}\cdot\frac{q}{p}$$
so that
$$0<\frac{N-p}{p(p-1)}q-\sigma<\frac{\nu(N-p)}{p(p-1)}.$$

If $q=\dfrac{N(p-1)}{N-p}$, then we can choose $\lambda=\varepsilon^{-\sigma}$ so that by \eqref{est_norm_q}
$$\begin{aligned}J_A(t_\varepsilon u_\varepsilon)&\le c_P+ O(\varepsilon^{ \frac{\nu(N-p)}{p(p-1)}})-O(\varepsilon^{\frac{N}{p}-\sigma}|\log(\varepsilon)|)
\end{aligned}$$
by choosing $\sigma$ as
$$0<\frac{N}{p}-\frac{\nu}{p}\frac{N-p}{p-1}<\sigma<\frac{N}{p}$$
so that
$$0<\frac{N}{p}-\sigma<\frac{\nu(N-p)}{p(p-1)}$$

If $\dfrac{N(p-1)}{N-p}<q<p^*$, then we can choose $\lambda=\varepsilon^{-\sigma}$ so that by \eqref{est_norm_q}
$$\begin{aligned}J_A(t_\varepsilon u_\varepsilon)&\le c_P+O(\varepsilon^{ \frac{\nu(N-p)}{p(p-1)}})- O(\varepsilon^{N-\frac{N-p}{p}q-\sigma})
\end{aligned}$$
by choosing $\sigma$ as
$$N-\frac{\nu(N-p)}{p(p-1)}-\frac{N-p}{p}q<\sigma<N-\frac{N-p}{p}q$$
so that
$$0< N-\frac{N-p}{p}q-\sigma<\frac{\nu(N-p)}{p(p-1)}.$$
Again, \eqref{claimfinale} is valid also in this latter case $N<p^2$when $\lambda$ is large enough.
Thus we have finally proved \eqref{claimfinale} under the different ranges of $\lambda$ described in $(f_3)$, which suits perfectly the proof, according to the values of $p$. Thus,  the proof is completed.

\end{proof}

Now, we are ready to prove our main result, that is the existence Theorem \ref{main}, whose statement is given
in the Introduction.

\begin{proof}[Proof of Theorem \ref{main}]
From Lemma \ref{mpt_geometry} the energy functional $J_A$ associated to problem \eqref{prob} satisfies the assumptions of Theorem \ref{mpthm}, nameley it has the mountain  pass geometry, thus there exists a Palais--Smale sequence $(u_n)_n\subset X$ of $J_A$ at level $c_A$, as pointed out
 in Remark \ref{culambda}. By Lemma \ref{c<csegnato}, then $0< c_A< c_P$, so that, according to Lemma~\ref{lem5},
there exists a nontrivial function $u\in X$ such that
$u_n \to u$ in $X$, so that $J'_A(u)\varphi=0$ for all $\varphi\in X$, that is $u$ is a solution of \eqref{prob}.
\end{proof}

\appendix

\section{Inequalities}\label{ineq}

This section concerns the proof of inequality \eqref{diaz_complex} which turned out to be useful in the final part of the proof of Lemma \ref{lem5}. Taking inspiration from \cite{lind}, let us start by recalling some well-known notions about the complex scalar product.  Take $a,b\in \mathbb C^N$ and denote $a_R,a_I,b_R,b_I\in \mathbb R^N$ their real and imaginary parts so that we can write equivalently $a=a_R+ia_I$, $b=b_R+ib_I$ or $a=(a_R,a_I)$, $b=(b_R,b_I)$.
So the scalar product in $\mathbb C^N$ can be written as
$$\begin{aligned}
\langle a,b \rangle_{\mathbb C^N}&=a\cdot \overline b=\sum_{k=1}^N a_k\overline{b_k}=
\sum_{k=1}^N (a_{kR} + ia_{kI})(b_{kR} -ib_{kI})\\
&=(a_R+ia_I)\cdot(b_R-ib_I)=a_R\cdot b_R+a_I\cdot b_I+i( a_I\cdot b_R-a_R\cdot b_I)\in\mathbb C,
\end{aligned}$$
where in the equality above $\cdot$ is the scalar product in $\mathbb R^{2N}$, by virtue of the isomorphism $\mathbb C^N\simeq R^{2N}$.
In particular, it holds $\langle a,b \rangle_{\mathbb C^N}=\overline{\langle b,a \rangle_{\mathbb C^N}}$.

For completeness, we recall an inequality from~\cite{bk} 
that will be useful in the sequel.
\begin{lemma}[{\cite[Lem.~1]{bk}}]\label{Lem.crucial}
For any non-negative real numbers $x,y,\gamma$, one has 
$$  (x+y)^\gamma \leq \alpha^\gamma x^\gamma + \beta^\gamma y^\gamma\,,$$
where $\alpha,\beta$ are any positive numbers satisfying 
$\frac{1}{\alpha} + \frac{1}{\beta} = 1$.
\end{lemma}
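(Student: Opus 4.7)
The plan is to notice first that the normalization $\frac{1}{\alpha}+\frac{1}{\beta}=1$ with $\alpha,\beta>0$ automatically forces $\alpha,\beta>1$, and then to split into two regimes of the exponent $\gamma$, exploiting the different shape of the power function $t\mapsto t^\gamma$ on $[0,\infty)$.

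For $\gamma\ge 1$, I would rewrite
$$x+y=\frac{1}{\alpha}(\alpha x)+\frac{1}{\beta}(\beta y),$$
which is a genuine convex combination of the nonnegative numbers $\alpha x$ and $\beta y$ precisely because of the normalization. Since $t\mapsto t^\gamma$ is convex on $[0,\infty)$ when $\gamma\ge 1$, Jensen's inequality yields the stronger bound
$$(x+y)^\gamma \le \frac{1}{\alpha}(\alpha x)^\gamma + \frac{1}{\beta}(\beta y)^\gamma = \alpha^{\gamma-1} x^\gamma + \beta^{\gamma-1} y^\gamma.$$
Because $\alpha,\beta\ge 1$ and $\gamma-1\le\gamma$, one has $\alpha^{\gamma-1}\le\alpha^\gamma$ and $\beta^{\gamma-1}\le\beta^\gamma$, from which the statement follows.

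For $0\le\gamma\le 1$, the map $t\mapsto t^\gamma$ is instead subadditive on $[0,\infty)$, so that $(x+y)^\gamma\le x^\gamma+y^\gamma$; combined with $\alpha^\gamma,\beta^\gamma\ge 1$ (again using $\alpha,\beta\ge 1$ and $\gamma\ge 0$), this yields $x^\gamma+y^\gamma\le \alpha^\gamma x^\gamma+\beta^\gamma y^\gamma$, closing the argument. I do not anticipate any real obstacle here: the only degenerate configuration is $\gamma=0$ with $x+y>0$, where the inequality reduces to $1\le 2$, and the case $x=y=0$ is trivial under any convention for $0^0$. The only organizational subtlety is the case split between the convex and concave regimes of $t^\gamma$; everything else is immediate from Jensen in one range and elementary subadditivity in the other.
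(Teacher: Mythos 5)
Your proof is correct. Note that the paper itself does not prove this lemma: it is stated verbatim with a citation to \cite{bk} and used as a black box, so there is no internal argument to compare against. Your case split is sound: the normalization $\tfrac{1}{\alpha}+\tfrac{1}{\beta}=1$ with $\alpha,\beta>0$ does force $\alpha,\beta>1$; Jensen on the convex combination $x+y=\tfrac{1}{\alpha}(\alpha x)+\tfrac{1}{\beta}(\beta y)$ gives the sharper $\alpha^{\gamma-1}x^\gamma+\beta^{\gamma-1}y^\gamma$ for $\gamma\ge1$; and subadditivity of $t\mapsto t^\gamma$ together with $\alpha^\gamma,\beta^\gamma\ge1$ handles $0\le\gamma\le1$. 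One small remark: the case split can be avoided entirely by observing that the same decomposition gives $x+y\le\max\{\alpha x,\beta y\}$ (a convex combination is bounded by the larger of the two points), whence $(x+y)^\gamma\le\max\{\alpha^\gamma x^\gamma,\beta^\gamma y^\gamma\}\le\alpha^\gamma x^\gamma+\beta^\gamma y^\gamma$ for every $\gamma\ge0$, using only monotonicity of $t\mapsto t^\gamma$. This buys a shorter argument at the cost of discarding the strictly better constants your Jensen step produces in the regime $\gamma\ge1$.
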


In what follows we report the proof of \eqref{diaz_complex} for $N=1$, but it can be repeated for $N>1$ by replacing the multiplication between real numbers with the scalar product in $\R^N$.

\begin{proof}[Proof of \eqref{diaz_complex}]
Let $a,b\in \CC$.
Observe that
$$\begin{aligned}
\lefteqn{
\langle|b|^{p-2}b-|a|^{p-2}a,
b-a\rangle_\CC}
\\
&=|b|^{p-2}\biggl(b_R^2+b_I^2-a_Rb_R-a_Ib_I\biggr)
+|a|^{p-2}\biggl(a_R^2+a_I^2-a_Rb_R-a_Ib_I\biggr)\\
& \quad+i\left[|b|^{p-2}\biggl(b_Ra_I-b_Ia_R\biggr)-|a|^{p-2}\biggl(a_Ib_R-a_Rb_I\biggr)\right].
\end{aligned}$$
Consequently,
$$\Re\langle|b|^{p-2}b-|a|^{p-2}a,
b-a\rangle_\CC=
|b|^{p-2}\biggl(b_R^2+b_I^2-a_Rb_R-a_Ib_I\biggr)
+|a|^{p-2}\biggl(a_R^2+a_I^2-a_Rb_R-a_Ib_I\biggr).$$
Since
$$|b-a|^2=(b_R-a_R)^2+(b_I-a_I)^2=
b_R^2+b_I^2+a_R^2+a_I^2-2a_Rb_R-2a_Ib_I
$$
and
$$|b|^2-|a|^2=b_R^2+b_I^2-a_R^2-a_I^2$$
so that
$$\begin{aligned}|b|^{p-2}\biggl(b_R^2+b_I^2-a_Rb_R-a_Ib_I\biggr)
&=|b|^{p-2}\biggl(\frac12|b-a|^2+\frac12\bigl(|b|^2-|a|^2\bigr)\biggr)
\end{aligned}$$
$$\begin{aligned}|a|^{p-2}\biggl(a_R^2+a_I^2-a_Rb_R-a_Ib_I\biggr)
&=|a|^{p-2}\biggl(\frac12|b-a|^2-\frac12\bigl(|b|^2-|a|^2\bigr)\biggr)
\end{aligned},$$
we reach
$$\Re\langle|b|^{p-2}b-|a|^{p-2}a,
b-a\rangle_\CC=
\frac{|b-a|^2}2\biggl(|a|^{p-2}+|b|^{p-2}\biggr)+\frac{|b|^2-|a|^2}2
\biggl(|b|^{p-2}-|a|^{p-2}\biggr).$$
Now, as in Lemma 6.1 in \cite{bi}, if $p\ge2$, by using the convexity of the function $f(y)=|y|^{p-1}$, $y\in \mathbb C$, namely that
$$\bigl(|b|^2-|a|^2\bigr)
\bigl(|b|^{p-2}-|a|^{p-2}\bigr)=\bigl(|b|+|a|\bigr)\bigl(|b|-|a|\bigr)\bigl(|b|^{p-2}-|a|^{p-2}\bigr)\ge0,$$ we get 
$$\Re\langle|b|^{p-2}b-|a|^{p-2}a,
b-a\rangle_\CC\ge 
\frac{|b-a|^2}2\bigl(|a|^{p-2}+|b|^{p-2}\bigr)\ge0.$$
The conclusion with $c=2^{2-p}$ follows from
$$ 
\begin{aligned}
|b-a|^p
=|b-a|^2|b-a|^{p-2}
&\le |b-a|^2\bigl(|a|+|b|\bigr)^{p-2}
\\
&\le 
\begin{cases}
2^{p-3}|b-a|^2\bigl(|a|^{p-2}+|b|^{p-2}\bigr)&\quad \text{if} \,\, p>3, \\
|b-a|^2\bigl(|a|^{p-2}+|b|^{p-2}\bigr)&\quad \text{if} \,\, 2\le p\le3,
\end{cases}
\end{aligned}
$$
where we have used \eqref{standard} with $\gamma=p-2$.

Now, we consider the most difficult case $1<p<2$. 
As in \cite[Sec.~10]{lind} we use the formula

$$|b|^{p-2}b- |a|^{p-2}a=\int_0^1\frac d{dt}\left[|a+t(b-a)|^{p-2}\bigl(a+t (b-a)\bigr)\right] dt$$
which yields, 
\begin{equation}\label{start}\begin{aligned}
|b|&^{p-2}b- |a|^{p-2}a=(b-a)\int_0^1|a+t(b-a)|^{p-2} dt \\
&+(p-2)\int_0^1|a+t(b-a)|^{p-4}\left[\Re\langle b-a, a+t(b-a)\rangle_{\CC}\right]\,\bigl(a+t(b-a)\bigr)dt,
\end{aligned}\end{equation}
where we have used 
$$\nabla(|v|)=\frac{\Re\langle \nabla v, v\rangle_{\CC}}{|v|}=\frac{\Re(\nabla v\cdot \overline{v})}{|v|}.$$
Thus, we obtain
$$\begin{aligned}
&\langle|b|^{p-2}b- |a|^{p-2}a, b-a\rangle_\CC=|b-a|^2\int_0^1|a+t(b-a)|^{p-2} dt \\
&+(p-2)\int_0^1|a+t(b-a)|^{p-4}\left[\Re\langle b-a, a+t(b-a)\rangle_{\CC}\right]\langle a+t(b-a),b-a\rangle_{\CC} dt
\end{aligned}$$
and consequently, we have
\begin{equation}\label{init}\begin{aligned}
    \Re&\langle|b|^{p-2}b- |a|^{p-2}a,b-a\rangle_\CC=|b-a|^2\int_0^1|a+t(b-a)|^{p-2} dt \\&+(p-2)\int_0^1|a+t(b-a)|^{p-4}\left[\Re\langle b-a, a+t(b-a)\rangle_{\CC}\right]^2dt.
\end{aligned}\end{equation}
Notice that, by using the Cauchy-Schwarz inequality, the last integral can be estimated by
\begin{equation}\label{init2}
\int_0^1|a+t(b-a)|^{p-4}\left[\Re\langle b-a, a+t(b-a)\rangle_{\CC}\right]^2dt\le |b-a|^2\int_0^1|a+t(b-a)|^{p-2}dt.
\end{equation}
On the other hand, from \eqref{init} and \eqref{init2}, we have
$$\begin{aligned}
\Re\langle|b|^{p-2}b- |a|^{p-2}a,b-a\rangle_\CC&\ge(p-1)|b-a|^2\int_0^1|a+t(b-a)|^{p-2} dt\\&\geq 
   (p-1) \, |b-a|^2 \, (|a|^2+|b|^2)^{(p-2)/2},
\end{aligned}$$
where for the second estimate we employ Lemma \ref{Lem.crucial} to get
$$|a+t(b-a)|^2 = |(1-t)a + t b|^2
  \leq (1-t)^2\alpha^2 |a|^2 + t^2 \beta^2 |b|^2
  = |a|^2 + |b|^2$$
with the choice $\gamma=2$, $\alpha = 1/(1-t)$ and $\beta=1/t$.
Consequently, by using \eqref{standard} with $\gamma=p/2$, we have
$$\begin{aligned}
  (\Re \langle |b|^{p-2} b - |a|^{p-2} a, b-a \rangle)^{p/2}
  &\geq (p-1)^{p/2} \, |b-a|^p \, (|a|^2+|b|^2)^{(p-2)p/4}  
  \\
  &\geq (p-1)^{p/2} \, |b-a|^p \, (|a|^p+|b|^p)^{(p-2)/2}  
  \,,
\end{aligned}  $$
which is \eqref{diaz_complex} with $c=(p-1)^{-p/2}$ if $1< p<2$.
\end{proof}

Note that, from \eqref{diaz_complex} for any $p\ge1$ we have the following simple fact
$$\Re\langle|b|^{p-2}b- |a|^{p-2}a,b-a\rangle_\CC>0,\qquad a\neq b.$$

We end this section by proving formula $(VI)$ in \cite{lind} in the case $p\ge 2$ and in $\mathbb C^N$, namely
$$||b|^{p-2}b- |a|^{p-2}a|\le (p-1)(|a|^{(p-2)/2}+|b|^{(p-2)/2})||b|^{(p-2)/2}b- |a|^{(p-2)/2}a|$$
for all $a,b\in\mathbb C^N$, which could be useful in a quasilinear complex scenario.
The starting point is identity \eqref{start}, which holds for any $p>1$.
The Schwarz inequality yields
$$\begin{aligned}
  \big|
  |b|^{p-2} b - |a|^{p-2} a
  \big| 
  &\leq  (p-1) \, |b-a| \int_0^1  |a+t(b-a)|^{p-2}  
  \, d t
  \\
  &= (p-1) \, |b-a| \, \left(|a|^{(p-2)/2} + |b|^{(p-2)/2}\right)
  \int_0^1  |a+t(b-a)|^{(p-2)/2} \, d t 
  \,,
\end{aligned} $$
where we employ
$$\begin{aligned}
  |a+t(b-a)|^{(p-2)/2} 
  &= |(1-t)a + t b|^{(p-2)/2} 
  \\
  &\leq (1-t)^{(p-2)/2}\alpha^{(p-2)/2} |a|^{(p-2)/2} 
  + t^{(p-2)/2} \beta^{(p-2)/2} |b|^{(p-2)/2}
  \\
  &= |a|^{(p-2)/2} + |b|^{(p-2)/2},
\end{aligned} $$
which follows from the application of Lemma \ref{Lem.crucial} with the choices $\gamma=p/2-1$, $\alpha= 1/(1-t)$ and $\beta=1/t$.
Secondly, using that $p \geq 2$ and the Schwarz inequality in~\eqref{init},
we have 
$$\begin{aligned}
  \big| |b|^{p-2} b - |a|^{p-2} a \big|
  \geq 
  |b-a| \int_0^1 |a+t(b-a)|^{p-2} \, d t
  \,;
\end{aligned} $$
and this inequality should be used 
with $p-2$ being replaced by $(p-2)/2$.

\section*{Acknowledgments}
L. B.\ and R. F.\ are members of the {\em Gruppo Nazionale per l'Analisi Ma\-te\-ma\-ti\-ca, la Probabilit\`a e le loro Applicazioni}
(GNAMPA) of the {\em Istituto Nazionale di Alta Matematica} (INdAM).
L. B. is partially supported by National Science Centre, Poland (Grant No. 2020/37/B/ST1/02742) and by the ``Maria de Maeztu'' Excellence Unit IMAG, reference CEX2020-001105-M, funded by MCIN/AEI/10.13039/501100011033/. 
R. Filippucci was partly supported by PRIN 2022 ``Advanced theoretical aspects in PDEs and their applications" Prot. 2022BCFHN.
D.K.\ was supported
by the EXPRO grant No.~20-17749X
of the Czech Science Foundation.

\end{document}